\newtheorem{theorem}{Theorem}[section]
\newtheorem{proposition}[theorem]{Proposition}
\newtheorem{lemma}[theorem]{Lemma}
\newtheorem{corollary}[theorem]{Corollary}
\newtheorem{remark}[theorem]{Remark}
\numberwithin{equation}{section}
\numberwithin{figure}{section}
\newcommand{\R}{\mathbb{R}}
\newcommand{\Z}{\mathbb{Z}}
\newcommand{\N}{\mathbb{N}}
\newcommand{\dd}{{\rm d}}
\newcommand{\fstop}{\; \text{.}}
\newcommand{\comma}{\; \text{,}\;\;}
\newcommand{\emparg}{\,\cdot\,}
\newcommand{\emp}{\varnothing}
\newcommand{\scalar}[2]{\left\langle #1\, \middle \vert\, #2 \right\rangle}
\newcommand{\nscalar}[2]{\langle #1\, \vert\, #2 \rangle}
\newcommand{\ann}{\mathfrak{a}_k}
\newcommand{\cre}{\mathfrak{a}_{k-1}^\dagger}
\newcommand{\cD}{\ensuremath{\mathcal D}} 
\newcommand{\cE}{\ensuremath{\mathcal E}}
\newcommand{\cJ}{\ensuremath{\mathcal J}} 
\newcommand{\cL}{\ensuremath{\mathcal L}}
\newcommand{\cS}{\ensuremath{\mathcal S}}
\def\gap{\mathop{\rm gap}\nolimits}
\title[Spectral gap of the symmetric inclusion process]{Spectral gap of the symmetric inclusion process}
\subjclass[2020]{Primary 60K35; secondary 60J27, 05C50.}
\author{Seonwoo Kim and Federico Sau}
\address{Seonwoo Kim\\ Department of Mathematical Sciences, Seoul National University, Seoul, Republic of Korea}
\email{ksw6leta@snu.ac.kr}
\address{Federico Sau\\ Dipartimento di Matematica, Informatica e Geoscienze, Università degli Studi di Trieste, Trieste, Italy}
\email{federico.sau@units.it}
\keywords{Interacting particle systems; unbounded conservative spin systems; spectral gap; symmetric inclusion process; Brownian energy process}
\begin{document} 
\maketitle
\thispagestyle{empty}

\begin{abstract}
	We consider the symmetric inclusion process  on a general  finite graph. Our main result establishes universal upper and lower bounds for the spectral gap of this interacting particle system in terms of the spectral gap of the random walk on the same graph. In the regime in which the gamma-like reversible measures of the particle systems are log-concave, our bounds match, yielding a version for the symmetric inclusion process of the celebrated Aldous' spectral gap conjecture originally formulated for the interchange process.
	Finally, by means of duality techniques, we draw  analogous conclusions for  an interacting diffusion-like unbounded conservative spin system known as Brownian energy process. 
\end{abstract}

\section{Introduction, models, and main results}

Establishing spectral gap inequalities is a central topic in the analysis of convergence to equilibrium for large scale stochastic dynamics. In the context of interacting particle systems (e.g., \cite{liggett_interacting_2005-1}),  a spectral gap estimate with a sharp dependence on the system size grants a good control on the local mixing properties of the dynamics. This yields a number of applications, for instance, in scaling limits and metastability (e.g., \cite{kipnis_scaling_1999,goncalves_nonlinear_2014,landim_marcondes_seo_metastable_2023} and references therein).	

In this paper, we focus on spectral gap inequalities for a specific model, namely the \emph{symmetric inclusion process} (SIP). This process was first introduced in \cite{giardina_duality_2007} as a particle analogue of statistical mechanics' models of interacting diffusions, and corresponds to a spatial version of the mean-field Moran model with mutation from population genetics (e.g., \cite{corujo2020spectrum}). SIP is also related to SEP, i.e., the symmetric \emph{exclusion} process (e.g., \cite{caputo_proof_2010}): SEP's hard-core exclusion interaction is replaced here by attraction between particles. 

SIP has been shown to exhibit interesting qualitative behaviors, such as  condensation and metastability in the limit of vanishing diffusivity $\alpha\to 0$ (e.g., \cite{grosskinsky_redig_vafayi_condensation_2011,grosskinsky_redig_vafayi_dynamics_2013,bianchi_metastability_2017,jatuviriyapornchai_structure_2020,kim_second_2021,kim_seo_condensation_2021}, see also Section \ref{sec:open-problems} below), representations in terms of Lie algebras and limit theorems out of equilibrium have also been rigorously investigated (e.g., \cite{giardina_duality_2009,franceschini2020symmetric, delloschiavo_portinale_sau_scaling_2021}). The present work represents a first step in the quantitative analysis of convergence to equilibrium for this interacting particle system, together with the same analysis for a related diffusion-like unbounded spin system called the Brownian energy process.

\subsection{SIP and its spectral gap}\label{sec:SIP}
Let $G=(V,(c_{xy})_{x,y \in V})$ be a connected weighted finite graph, with $|V|=n$ sites  and symmetric non-negative edge-weights, i.e.,  $c_{xy}=c_{yx}\ge 0$ for all $x, y \in V$. For any  positive site-weights $\alpha=(\alpha_x)_{x\in V}$ and  $k\in \N$ total number of particles, we let ${\rm SIP}_k(G,\alpha)$ be defined as the Markov process $(\eta(t))_{t\ge 0}$ on the configuration space with $k$ particles 
\begin{equation}\nonumber
	\Xi_k:=\left\{\eta\in \N_0^V: |\eta|=k\right\}\ ,\qquad |\eta|:=\sum_{x\in V}\eta_x\ ,
\end{equation}  and
with infinitesimal generator $L_k$ given, for all functions $f\in \R^{\Xi_k}$, by
\begin{equation}\label{eq:generator-SIP}
	L_k f(\eta)=\sum_{x\in V}\eta_x\sum_{y\in V}c_{xy}\, (\alpha_y+\eta_y)\left(f(\eta-\delta_x+\delta_y)-f(\eta)\right)\ ,\qquad \eta \in \Xi_k\ .
\end{equation}
In this formula, $\eta-\delta_x+\delta_y$ denotes the configuration in $\Xi_k$ obtained from $\eta$ (with $\eta_x\ge 1$) by moving  a particle from site $x$ to $y$. We observe that, if $\left(\alpha_y+\eta_y\right)$ were to be replaced by $\left(1-\eta_y\right)$, $L_k$ in \eqref{eq:generator-SIP} would describe a SEP-dynamics (e.g., \cite{caputo_proof_2010}); while, if $c_{xy}\equiv 1/n$, we would obtain the Moran model with parent-independent mutation (e.g., \cite{corujo2020spectrum}).

For each $k \in \N$, due to the connectedness of the graph $G$, the generator $L_k$ describes an irreducible Markov chain with a unique invariant measure, which we call $\mu_{\alpha,k}$. As a simple detailed balance computation shows, the particle system is actually reversible with respect to $\mu_{\alpha,k}$, which reads as follows:
\begin{equation}\label{eq:inv-msr}
	\mu_{\alpha,k}(\eta)=\frac{1}{Z_{\alpha,k}}\prod_{x\in V}\frac{\Gamma(\alpha_x+\eta_x)}{\Gamma(\alpha_x)\, \eta_x!}
	\ ,\qquad \eta \in \Xi_k\ ,
\end{equation}
with 
\begin{equation}\nonumber
	Z_{\alpha,k}:= \frac{\Gamma(|\alpha|+k)}{\Gamma(|\alpha|)\, k!}\ ,\qquad |\alpha|:=\sum_{x\in V}\alpha_x\ ,
\end{equation}	being the normalization constant. Here, $\Gamma(\beta)$ is the usual gamma function, for which we recall $\Gamma(\beta+\ell)/\Gamma(\beta)=  \beta \left(\beta+1\right)\cdots \left(\beta+\ell-1\right)$ for $\beta >0$ and $\ell \in \N$. 

By the ergodic theorem for finite-state Markov chains, the law of the $k$-particle system converges in the long-run to the invariant measure $\mu_{\alpha,k}$. Moreover, since the process is reversible, the generator $L_k$ has  real and non-positive eigenvalues $$-\lambda_{k,|\Xi_k|-1}\le \ldots \le -\lambda_{k,1}<- \lambda_{k,0}=0\ ,$$ 
and all admit a variational characterization. Among these, the spectral gap  --- namely, the second smallest eigenvalue of $-L_k$ --- measures the exponential rate of such a convergence, e.g., for all times $t\ge 0$, 
\begin{equation*}
	\exp(-\lambda_{k,1}t)\le 	\sup_{\eta\in \Xi_k}2\left\|\mu^\eta_t-\mu_{\alpha,k}\right\|_{\rm TV}\le \left(\min_{\eta\in \Xi_k}\mu_{\alpha,k}(\eta)\right)^{-1/2}\exp(-\lambda_{k,1}t)\ ,
\end{equation*} 
where $\left\|\emparg\right\|_{\rm TV}$ denotes the total-variation norm, while $ \mu^\eta_t={\rm Law}(\eta(t)))$ when  $\eta(0)=\eta\in \Xi_k$. We refer the interested reader to, e.g., \cite{saloff1997lectures} for more details on these classical bounds.

In what follows, we let $$\gap_k(G,\alpha):=\lambda_{k,1}$$ denote the spectral gap of ${\rm SIP}_k(G,\alpha)$.
Our main goal is to estimate $\gap_k(G,\alpha)$ in terms of the spectral gap of the corresponding random walk on the same graph,
i.e.,  the Markov process, referred to as ${\rm RW}(G,\alpha)$, on $V$  with generator $A_\alpha$ acting on functions $f\in \R^V$ as 
\begin{equation}\nonumber
	A_\alpha f(x)= \sum_{y\in V}c_{xy}\, \alpha_y\left(f(y)-f(x)\right)\ ,\qquad x \in V\ .
\end{equation}
Since the rate to jump from site $x$ to $y$ equals $c_{xy}\, \alpha_y$, detailed balance shows that the 
reversible measure for ${\rm RW}(G,\alpha)$ is proportional to $\alpha=(\alpha_x)_{x\in V}$.

We observe that ${\rm SIP}_k(G,\alpha)$  with just one particle ($k=1$) corresponds to a single random walk (thus, non-interacting) evolving like ${\rm RW}(G,\alpha)$ on the sites of $G$; the non-trivial  inclusion interaction between particles occurs as soon as $k\ge 2$.	Let   
\begin{equation}\nonumber
	\gap_{\rm SIP}(G,\alpha):= \inf_{k\ge 2} \gap_k(G,\alpha)\qquad \text{and}\qquad\gap_{\rm RW}(G,\alpha):= \gap_1(G,\alpha)
\end{equation}
denote the spectral gaps of the interacting particle system and of the random walk,  respectively. We are now ready to state 
our main result.
\begin{theorem}\label{th:spectral-gap-SIP}
	For every $G=(V,(c_{xy})_{x,y \in V})$ and $\alpha=(\alpha_x)_{x\in V}$, 
	\begin{equation}\nonumber
		\left(1\wedge \alpha_{\rm min}\right) \gap_{\rm RW}(G,\alpha)\le \gap_{\rm SIP}(G,\alpha)\le \gap_{\rm RW}(G,\alpha)\ ,
	\end{equation}
	where $\alpha_{\rm min}:= \min_{x\in V}\alpha_x$.
\end{theorem}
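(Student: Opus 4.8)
For $\gap_{\rm SIP}(G,\alpha)\le\gap_{\rm RW}(G,\alpha)$ the plan is to exhibit an explicit eigenfunction. Take $f\in\R^V$ non-constant with $A_\alpha f=-\gap_{\rm RW}(G,\alpha)\,f$ and, for $k\ge1$, put $F(\eta):=\sum_{x\in V}\eta_x f(x)$ on $\Xi_k$. Since $F(\eta-\delta_x+\delta_y)-F(\eta)=f(y)-f(x)$, inserting $F$ into \eqref{eq:generator-SIP} splits $L_kF$ into a linear term $\sum_{x}\eta_x\,(A_\alpha f)(x)=-\gap_{\rm RW}(G,\alpha)\,F(\eta)$ and a quadratic term $\sum_{x,y}c_{xy}\,\eta_x\eta_y\,(f(y)-f(x))$, which vanishes after symmetrising in $(x,y)$ because $c_{xy}=c_{yx}$. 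Thus $L_kF=-\gap_{\rm RW}(G,\alpha)\,F$; as $f$ is non-constant, $F$ is non-constant on each $\Xi_k$ and, being an eigenfunction for a nonzero eigenvalue, orthogonal to the constants in $L^2(\mu_{\alpha,k})$, whence $\gap_k(G,\alpha)\le\gap_{\rm RW}(G,\alpha)$ for all $k\ge1$. (This is the one-particle case of the self-duality of SIP; the general self-duality furthermore shows $k\mapsto\gap_k(G,\alpha)$ to be non-increasing.)

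\textbf{Lower bound: set-up.} It suffices to establish, for each fixed $k\ge2$, the Poincaré inequality
\begin{equation}\nonumber
(1\wedge\alpha_{\rm min})\,\gap_{\rm RW}(G,\alpha)\;\var_{\mu_{\alpha,k}}(f)\ \le\ \cE_k(f)\comma f\in\R^{\Xi_k}\comma
\end{equation}
with $\cE_k(f):=-\skp{f}{L_kf}_{\mu_{\alpha,k}}$. I would use two structural facts: $L_k=\sum_{\{x,y\}}c_{xy}\,L_{xy}$ is an edge-sum of two-site inclusion generators, and each $L_{xy}$ --- indeed $L_k$ for \emph{any} choice of edge-weights --- is reversible with respect to $\mu_{\alpha,k}$ in \eqref{eq:inv-msr}, which depends only on $\alpha$ and $k$. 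Hence Dirichlet forms of ${\rm SIP}_k(G,\alpha)$ on different weighted graphs over a fixed vertex set may be compared, and the plan, patterned on the SEP argument of \cite{caputo_proof_2010}, is to prove an octopus-type inequality on a star and then run the associated network-reduction induction (the base case $|V|=2$ being an explicit birth-and-death computation showing $\gap_k=\gap_1$ there) so as to reach an arbitrary connected $G$.

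\textbf{The octopus inequality.} On the star with centre $0$ and leaves $1,\dots,m$ of weights $c_1,\dots,c_m>0$, the target is the operator inequality on $\R^{\Xi_k}$
\begin{equation}\nonumber
\sum_{j=1}^{m}c_j\,\bigl(-L_{0j}\bigr)\ \ge\ \bigl(1\wedge\alpha_{\rm min}\bigr)\sum_{1\le i<j\le m}\frac{c_ic_j}{\sum_{\ell=1}^{m}c_\ell}\,\bigl(-L_{ij}\bigr)\ ,
\end{equation}
whose constant I expect to improve to $1$ precisely when the single-site weights $\eta_x\mapsto\Gamma(\alpha_x+\eta_x)/(\Gamma(\alpha_x)\,\eta_x!)$ are log-concave, i.e.\ when $\alpha_{\rm min}\ge1$; this is what produces the factor $1\wedge\alpha_{\rm min}$ and, in the log-concave regime, the matching bounds announced in the introduction. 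Feeding this into the network reduction of \cite{caputo_proof_2010} --- repeatedly replacing a star by the complete graph on its leaves with effective weights $c_ic_j/\sum_\ell c_\ell$, which only lowers the Dirichlet form --- then yields the claimed Poincaré inequality for general $G$.

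\textbf{Main obstacle.} The crux is the octopus inequality. For SEP, with occupation numbers in $\{0,1\}$, it is already intricate; for SIP the occupation variables are unbounded, so the operators $L_{ij}$ act on birth-and-death chains of arbitrary length and the combinatorial cancellations available for SEP must be replaced by bounds \emph{uniform in the number of particles} on the star. The factor $1\wedge\alpha_{\rm min}$ should ultimately trace back to the pointwise rate bound $\eta_x(\alpha_y+\eta_y)\ge(1\wedge\alpha_{\rm min})\,\eta_x(1+\eta_y)$, saturated when $\alpha\equiv\mathbf 1$; converting this into the octopus inequality, with all constants independent of $k$, and --- when some $\alpha_x<1$ --- ensuring that the degraded inequality still feeds correctly into the network reduction, is where the difficulty concentrates.
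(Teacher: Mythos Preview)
Your upper bound argument is correct and coincides with the paper's Lemma~\ref{lem:lift-1}.

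For the lower bound you have not given a proof but a programme whose central step --- a SIP octopus inequality --- you explicitly leave open. Two concrete obstructions. First, as the paper itself observes in Section~\ref{sec:ASGC}, the network-reduction/octopus machinery of \cite{caputo_proof_2010} leans on particle--hole symmetry (equivalently, on SEP arising as a projection of the interchange process), structure that SIP lacks; no SIP octopus is known, and the pointwise rate comparison $\eta_x(\alpha_y+\eta_y)\ge(1\wedge\alpha_{\rm min})\,\eta_x(1+\eta_y)$ does not by itself produce one --- already for SEP the octopus is a delicate correlation inequality, not a consequence of a rate bound. Second, even granting an octopus with constant $1\wedge\alpha_{\rm min}<1$, the CLR reduction applies it once per deleted vertex, so after $|V|-2$ steps the constant would compound to $(1\wedge\alpha_{\rm min})^{|V|-2}$ rather than $1\wedge\alpha_{\rm min}$; you flag this (``ensuring that the degraded inequality still feeds correctly'') but offer no mechanism to prevent the blow-up.

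The paper's route is entirely different: induction on the particle number $k$, not on $|V|$. The orthogonal decomposition $L^2(\mu_{\alpha,k})={\rm Im}\,\ann\oplus_{\perp}{\rm Ker}\,\cre$ (Proposition~\ref{prop:ann-cre}) reduces the problem to ${\rm Ker}\,\cre$, where one rewrites $\cE_{\alpha,k}(f)$ as a $\mu_{\alpha,k-1}$-average over $\xi\in\Xi_{k-1}$ of \emph{single-particle} Dirichlet forms $\cD_{\alpha+\xi}(f_\xi)$ (Lemma~\ref{lem:Dirichlet-form-decomposition}). The kernel condition $\cre f=0$ kills the centering term and produces a clean factor $k$; a min--max comparison (Lemma~\ref{lem:min-max}) gives $\gap_{\rm RW}(\alpha+\xi)\ge\tfrac{\alpha_{\rm min}}{\alpha_{\rm min}+k-1}\gap_{\rm RW}(\alpha)$, and the product $\tfrac{k\,\alpha_{\rm min}}{\alpha_{\rm min}+k-1}\ge 1\wedge\alpha_{\rm min}$ closes the induction without any octopus inequality.
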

These bounds can be used to efficiently estimate the spectral gap of ${\rm SIP}$ in concrete examples. Remarkably, we observe that the inequalities in Theorem \ref{th:spectral-gap-SIP} saturate to identities as soon as $\alpha_{\rm min}\ge 1$ (which is equivalent to the log-concavity of $\mu_{\alpha,k}$), yielding the following spectral gaps' identity:
\begin{corollary}\label{cor:aldous-identity}
	For every $G=(V,(c_{xy})_{x,y\in V})$ and $\alpha=(\alpha_x)_{x\in V}$ such that $\alpha_{\rm min}\ge 1$, 
	\begin{equation}\label{eq:aldous-identity}
		\gap_{\rm SIP}(G,\alpha)=\gap_{\rm RW}(G,\alpha)\ .
	\end{equation}
\end{corollary}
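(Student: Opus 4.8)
The plan is to read off Corollary \ref{cor:aldous-identity} directly from Theorem \ref{th:spectral-gap-SIP}. Indeed, under the hypothesis $\alpha_{\rm min}\ge 1$ one has $1\wedge\alpha_{\rm min}=1$, so the lower and upper bounds in Theorem \ref{th:spectral-gap-SIP} coincide and the chain of inequalities collapses to
\[
	\gap_{\rm RW}(G,\alpha)\le\gap_{\rm SIP}(G,\alpha)\le\gap_{\rm RW}(G,\alpha)\ ,
\]
which is exactly \eqref{eq:aldous-identity}. Thus there is nothing to prove beyond Theorem \ref{th:spectral-gap-SIP}: all the content — in particular the lower bound, with its prefactor $1\wedge\alpha_{\rm min}$, which is the hard half of that theorem — is already in place, and the corollary is merely its specialization to the log-concave regime.

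For completeness I would also spell out the parenthetical assertion that $\alpha_{\rm min}\ge 1$ is precisely the log-concavity threshold for the reversible measures \eqref{eq:inv-msr}. Writing $\mu_{\alpha,k}(\eta)\propto\prod_{x\in V}w_{\alpha_x}(\eta_x)$ with $w_\beta(j):=\Gamma(\beta+j)/(\Gamma(\beta)\,j!)$, a move $\eta\mapsto\eta-\delta_x+\delta_y$ (and its reverse) alters only two coordinates, so the discrete log-concavity inequality
\[
	\mu_{\alpha,k}(\eta)^2\ge\mu_{\alpha,k}(\eta-\delta_x+\delta_y)\,\mu_{\alpha,k}(\eta+\delta_x-\delta_y)
\]
factorizes into single-site inequalities; since the ratio $w_\beta(j+1)/w_\beta(j)=(\beta+j)/(j+1)$ is non-increasing in $j\in\N_0$ if and only if $\beta\ge 1$, these hold for all $k$ exactly when $\alpha_x\ge 1$ for every $x\in V$, i.e., $\alpha_{\rm min}\ge 1$.

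There is no real obstacle in the corollary itself; the only point worth stressing in the write-up is interpretive rather than technical. Identity \eqref{eq:aldous-identity} is the SIP analogue of Aldous' spectral gap identity for the interchange process, and $\alpha_{\rm min}\ge 1$ is the natural threshold below which the inclusion interaction can strictly slow down relaxation relative to a single walker — consistent with the condensation and metastability phenomena known to emerge in the opposite regime $\alpha\to 0$.
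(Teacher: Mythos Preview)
Your proposal is correct and matches the paper's approach exactly: the corollary is stated immediately after Theorem~\ref{th:spectral-gap-SIP} with no separate proof, the paper simply observing that the inequalities there ``saturate to identities as soon as $\alpha_{\rm min}\ge 1$.'' Your additional verification of the log-concavity claim is a nice supplement (the paper leaves it as a parenthetical remark), but the corollary itself is indeed nothing more than the specialization $1\wedge\alpha_{\rm min}=1$.
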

The  result in Corollary \ref{cor:aldous-identity} may be interpreted as a ${\rm SIP}$'s version  of the celebrated \emph{Aldous' spectral gap conjecture}, originally formulated for the interchange process and ${\rm SEP}$, and recently solved by Caputo \emph{et al.} in \cite{caputo_proof_2010}. An identity of the type \eqref{eq:aldous-identity} may be viewed as an exact tensorization of the Poincaré inequality over the $k$ components, a property which trivially holds when considering $k$ independent particles. Such an identity is, in general, not expected to hold for truly interacting systems. This property --- apart from situations in which the spectrum is fully explicit (e.g., \cite{shimakura_equations1977,griffiths_lambda2014,corujo2020spectrum})  --- has been established on \emph{any graph} only for a few examples other than the interchange process and SEP:
\begin{itemize}
	\item  ${\rm SEP}$ in contact with reservoirs in  \cite{salez2022universality,salez2023spectral};
	\item the binomial splitting process in \cite{quattropani2021mixing} (see also \cite{bristiel_caputo_entropy_2021}).
\end{itemize} Corollary \ref{cor:aldous-identity} proves that ${\rm SIP}$ with $\alpha_{\rm min}\ge 1$ also satisfies this exact tensorization property. See also Section \ref{sec:ASGC} for a more detailed discussion on Aldous' spectral gap conjecture and related work.

\subsection{BEP and its spectral gap}
The Brownian energy process (BEP) is an interacting system of continuous spins placed on the sites of a graph (e.g., \cite{giardina_duality_2009}). This process falls into the larger class of \emph{unbounded conservative spin systems}, and is intimately related to ${\rm SIP}$.  The spins (or, energies) evolve as diffusions. Moreover, the dynamics preserves the total amount of energy of the system, and is reversible with respect to measures associated to gamma distributions. Just like  SIP and the Moran model are related, the ${\rm BEP}$ on the complete graph corresponds to the multi-type Wright-Fisher diffusion with mutation.

Let us now describe the model more formally. Given a graph $G$ and site-weights $\alpha=(\alpha_x)_{x\in V}$,   the  ${\rm BEP}(G,\alpha)$ is the diffusion process $(\zeta(t))_{t\ge 0}$  on $[0,\infty)^V$,  whose infinitesimal evolution is described by 	the following generator:	
\begin{equation}\label{eq:generator-BEP}
	\cL= \frac{1}{2}\sum_{x,y\in V}c_{xy}\left\{-\left(\alpha_y\, \zeta_x-\alpha_x\,\zeta_y\right)\left(\partial_{\zeta_x}-\partial_{\zeta_y}\right)+\zeta_x\,\zeta_y\left(\partial_{\zeta_x}-\partial_{\zeta_y}\right)^2\right\}\ .
\end{equation}

The diffusion $(\zeta(t))_{t\ge 0}$ admits $\nu_\theta:=\otimes_{x\in V}\, {\rm Gamma}(\alpha_x,\theta)$, $\theta>0$,	 as a one-parameter family of reversible product measures, fully supported on $[0,\infty)^V$. However,  all features of the system are well-captured by the dynamics which only considers configurations with unit total energy: on the one side,  applying the generator $\cL$ to the function $\zeta\mapsto |\zeta|:=\sum_{x\in V}\zeta_x$ shows that  the dynamics  conserves the total energy of the system; on the other side, a simple scaling argument demonstrates that the action of $\cL$ does not depend on $|\zeta|$.
Therefore, all throughout, it suffices to consider $\zeta(t)$ as evolving on $\Delta_V$, the simplex of probability measures on $V$,   for which $\pi:=\nu_\theta(\,\cdot\mid \zeta\in \Delta_V)$, $\theta>0$,  is reversible, and given by
\begin{equation}\nonumber
	\pi(\dd\zeta)= \left(\frac{1}{B(\alpha)}\prod_{x\in V} \zeta_x^{\alpha_x-1}\right) \dd \zeta\ ,\qquad \text{with}\ \zeta \in \Delta_V\ ,\ B(\alpha):= \frac{\prod_{x\in V}\Gamma(\alpha_x)}{\Gamma(|\alpha|)}\ ,
\end{equation}
where $\dd \zeta$ denotes the uniform measure on $\Delta_V$. Note that $\pi$ is independent of $\theta>0$. 

Quantifying the exponential rate of convergence to equilibrium goes through a spectral analysis of the generator $\cL$ on $L^2(\Delta_V,\pi)$. Since $\cL$ is self-adjoint on $L^2(\Delta_V,\pi)$, its spectrum  is real. Moreover, as we will show in Section \ref{sec:BEP-proof}, $-\cL$ is non-negative and has a pure-point spectrum, with only one zero eigenvalue corresponding to the constant eigenfunction.

In our next result, we provide an analogue of Theorem \ref{th:spectral-gap-SIP} for ${\rm gap}_{\rm BEP}(G,\alpha)>0$, the  smallest non-zero eigenvalue of $-\cL$ on $L^2(\Delta_V,\pi)$. 
\begin{theorem}\label{th:spectral-gap-BEP} For every $G=(V,(c_{xy})_{x,y\in V})$ and $\alpha=(\alpha_x)_{x\in V}$,
	\begin{equation}\nonumber
		\left(1\wedge \alpha_{\rm min}\right)\gap_{\rm RW}(G,\alpha)\le \gap_{\rm BEP}(G,\alpha)\le \gap_{\rm RW}(G,\alpha)\ .
	\end{equation}
	Hence, provided $\alpha_{\rm min}\ge 1$,
	\begin{equation}\label{eq:aldous-identity-BEP}
		\gap_{\rm BEP}(G,\alpha)=\gap_{\rm RW}(G,\alpha)\ .
	\end{equation}
	
\end{theorem}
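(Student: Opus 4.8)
The plan is to transfer the bounds for ${\rm SIP}$ in Theorem \ref{th:spectral-gap-SIP} to ${\rm BEP}$ by means of the well-known duality between the two processes, together with an approximation/limiting argument linking the discrete spectra of the $k$-particle systems to the spectrum of the diffusion. The key structural fact is that ${\rm BEP}(G,\alpha)$ and $\{{\rm SIP}_k(G,\alpha)\}_{k\ge 1}$ are dual: there is a family of duality functions $D(\eta,\zeta)$, indexed by $\eta\in\Xi_k$ and $\zeta\in\Delta_V$ (the classical choice being, up to normalization, $D(\eta,\zeta)=\prod_{x\in V}\zeta_x^{\eta_x}\big/\prod_{x\in V}\tfrac{\Gamma(\alpha_x+\eta_x)}{\Gamma(\alpha_x)}$, suitably adapted to the unit-energy simplex), such that $\cL D(\eta,\emparg)(\zeta)=L_k D(\emparg,\zeta)(\eta)$. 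Since for each fixed $k$ the functions $\zeta\mapsto D(\eta,\zeta)$, $\eta\in\Xi_k$, span the space $\mathcal P_k$ of polynomials on $\Delta_V$ that are homogeneous of degree $k$ (restricted to the simplex), and since $\cL$ maps $\mathcal P_{\le k}$ into itself, the eigenvalues of $-\cL$ restricted to $\mathcal P_{\le k}$ coincide (with multiplicity) with the eigenvalues of $-L_k$ on $\R^{\Xi_k}$.

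From here I would argue as follows. First, establish that $\bigcup_{k}\mathcal P_{\le k}$, i.e.\ all polynomials on $\Delta_V$, is dense in $L^2(\Delta_V,\pi)$ (Stone–Weierstrass plus the fact that $\pi$ has compact support and full support), that $\cL$ preserves this core, and hence that the point spectrum of $-\cL$ on $L^2(\Delta_V,\pi)$ is exactly $\bigcup_k \operatorname{spec}(-L_k)$; this simultaneously yields the claimed pure-point, non-negative spectrum with simple zero eigenvalue. Second, take second-smallest eigenvalues: since $\operatorname{spec}(-L_1)\subset\operatorname{spec}(-\cL)$ and the smallest nonzero eigenvalue of $-L_1$ is $\gap_{\rm RW}(G,\alpha)$, while every $-L_k$ with $k\ge 2$ has second eigenvalue $\gap_k(G,\alpha)\ge \gap_{\rm SIP}(G,\alpha)$, one gets
\[
\gap_{\rm BEP}(G,\alpha)=\min\Big(\gap_{\rm RW}(G,\alpha),\ \inf_{k\ge 2}\gap_k(G,\alpha)\Big)=\min\big(\gap_{\rm RW}(G,\alpha),\gap_{\rm SIP}(G,\alpha)\big).
\]
Plugging in Theorem \ref{th:spectral-gap-SIP}, which gives $\gap_{\rm SIP}(G,\alpha)\le\gap_{\rm RW}(G,\alpha)$ and $\gap_{\rm SIP}(G,\alpha)\ge(1\wedge\alpha_{\rm min})\gap_{\rm RW}(G,\alpha)$, this yields $\gap_{\rm BEP}(G,\alpha)=\gap_{\rm SIP}(G,\alpha)$, hence both displayed inequalities, and the identity \eqref{eq:aldous-identity-BEP} when $\alpha_{\rm min}\ge1$ follows from Corollary \ref{cor:aldous-identity}.

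The main obstacle is the functional-analytic bookkeeping rather than any new probabilistic idea: I must verify carefully that the duality functions $\{D(\eta,\emparg):\eta\in\Xi_k\}$ are linearly independent and span $\mathcal P_k$ on the simplex (so that no spectral mass is lost or double-counted), that $\cL$ is essentially self-adjoint on the polynomial core so that its $L^2$-closure has no spectrum beyond $\bigcup_k\operatorname{spec}(-L_k)$ and in particular no continuous spectrum, and that the degree filtration $\mathcal P_{\le 0}\subset\mathcal P_{\le1}\subset\cdots$ is genuinely $\cL$-invariant — this last point is a direct computation from \eqref{eq:generator-BEP}, since each term $(\alpha_y\zeta_x-\alpha_x\zeta_y)(\partial_{\zeta_x}-\partial_{\zeta_y})$ preserves degree and $\zeta_x\zeta_y(\partial_{\zeta_x}-\partial_{\zeta_y})^2$ lowers it by at most one (on homogeneous polynomials it preserves degree, after accounting for the simplex constraint). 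A minor subtlety is keeping track of the normalization so that the unit-energy reduction matches $\pi$ exactly; this is routine given the scaling invariance of $\cL$ noted before the theorem.
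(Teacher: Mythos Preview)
Your proposal is correct and follows essentially the same route as the paper: the paper also establishes pure point spectrum via the $\cL$-invariant filtration by polynomials of bounded degree, uses the intertwining $\cL\varLambda=\varLambda L_k$ (your duality relation, repackaged) to show that every eigenvalue of $\cL$ is an eigenvalue of some $L_k$, and exhibits $-\gap_{\rm RW}(\alpha)$ explicitly as an eigenvalue of $\cL$ via the lifted linear eigenfunction. Your formulation is slightly sharper in that you argue $\varLambda:\R^{\Xi_k}\to\mathcal P_{\le k}$ is an isomorphism on the simplex (so the spectra coincide with multiplicity, yielding the bonus identity $\gap_{\rm BEP}=\gap_{\rm SIP}$), whereas the paper is content with the inclusion $\operatorname{spec}(-\cL)\subseteq\bigcup_k\operatorname{spec}(-L_k)$; both suffice for the stated bounds.
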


\subsection{Related work, proof strategy, and open problems}\label{sec:rel-work}
\subsubsection{Functional inequalities and comparison techniques}
Functional inequalities play a major role in PDE and probability theory, and several approaches have been developed for this purpose. Comparison techniques (e.g., \cite{saloff1997lectures,montenegro_mathematical_2005}) are among the most robust and well-established ones, and proved to be especially effective when estimating spectral gaps (or Poincaré constants, their inverses), log-Sobolev constants, and Nash inequalities.

In the more specific context of interacting particle systems and unbounded spin systems subjected to conservation laws, comparing Dirichlet forms is key in the so-called martingale method and its variants (e.g., \cite{quastel_diffusion_1992,lu_yau_spectral_1993,diaconis_saloff-coste-comparison_1993,landim_spectral_1996,janvresse_landim_quastel_yau_relaxation_1999,landim_panizo_yau_spectral_2002,caputo_uniform_2003,caputo_spectral2004,  caputo_kac2008,sasada_spectral_2013} and references therein). In a nutshell, this strategy compares the system's Dirichlet form on the geometry of interest (e.g., SEP on $\Z^d$-boxes of size $\ell$), to that on more tractable geometries (e.g., the complete graph), and finally transfers  the gained information back through a path counting argument. In most examples, this method captures the correct dependence on the size $\ell$ of the system, but the universal prefactor is typically not optimal (e.g., it deteriorates with $d$, the dimension of the box).

\subsubsection{Aldous' spectral gap conjecture and related examples}\label{sec:ASGC}
Sharper identities like the one expressed in
Aldous' spectral gap conjecture \cite{caputo_proof_2010} holding true on general graphs have been verified only for a handful of  models (as already 	 discussed below	 Corollary \ref{cor:aldous-identity}),  each of these examples requiring \emph{ad hoc} proof arguments.

The proof in \cite{caputo_proof_2010} for the interchange process and SEP combines in a non-trivial way a nonlinear network reduction and a hard correlation  inequality (which became known as \emph{Octopus inequality}). The first ingredient allows an induction argument  on $n$, the size of the graph, well-compatible with the particle-hole symmetry of SEP. Such a symmetry (or, alternatively,  the fact that SEP may be obtained as a projection of the interchange process) is a property that seems to be lacking for SIP. 

Negative dependence --- a form of negativity of correlations of all orders --- is nicely exploited in \cite{salez2022universality}  for the non-conservative reversible SEP. We remark that SIP is positive  (rather than negative) dependent, see, e.g., \cite{giardina_redig_vafayi_correlation_2010,floreani_boundary2020}. 

The arguments in \cite{quattropani2021mixing} for the binomial splitting process build on a $L^2$-contraction inequality established in \cite{aldous_lecture_2012} for the \textquotedblleft dual\textquotedblright\ averaging process.  the  ${\rm BEP}$ is one of the continuum duals of SIP (e.g., \cite{giardina_duality_2009}); nevertheless, the analogue of such a contraction estimate is not known for  the   ${\rm BEP}$.

\subsubsection{Discussion on proof strategy}

Our approach combines in an elementary way two main ingredients:
\begin{itemize}
	\item self-duality of the interacting particle system;
	\item
	comparison inequalities.
\end{itemize}

Self-duality/consistency of SIP (cf.\  \eqref{eq:intertwining-ann} and \eqref{eq:intertwining-cre} below)  ensures a certain rigidity and structure of eigenvalues and eigenfunctions. This immediately yields the upper bound in Theorem \ref{th:spectral-gap-SIP} (Section \ref{sec:UB}), and this is what allows to effectively set off an induction argument on $k$, the total number of particles (rather than $n$, the size of the graph, as in \cite{caputo_proof_2010}), for the lower bound in Section \ref{sec:LB}.  We mention that an analogous consistency-based characterization  of eigenspaces found application also in the proof of \cite[Theorem 2.1]{quattropani2021mixing} on the spectral gap of the binomial splitting process.

In closing the proof of the induction step from $k-1$ to $k$, we employ comparison inequalities. Inspired by the recent work \cite{hermon_version_2019} on  zero-range dynamics, in Lemma \ref{lem:Dirichlet-form-decomposition} we rearrange SIP's Dirichlet form so to reduce our task to an estimate of the spectral gap not of the whole system, but rather of the \emph{$k$th particle only}, \emph{uniformly over the positions of the remaining $k-1$ particles}. Finally, the min-max theorem for eigenvalues (e.g., \cite[Theorem 1.2.10]{saloff1997lectures}) yields the desired single-particle spectral gap inequality (Lemma \ref{lem:min-max}). For this step, we compare both Dirichlet forms and $L^2$-norms of such a particle to those of a non-interacting walk. We emphasize that, although the estimate that 	we get in Lemma \ref{lem:min-max} deteriorates by a factor $k$, the reduction  applied in Lemma \ref{lem:Dirichlet-form-decomposition} returns back exactly the same factor $k$, thus, allows us to conclude the proof for SIP's spectral gap estimates.

The analogous result for  the  ${\rm BEP}$ is derived by remapping SIP into  the  ${\rm BEP}$ via an intertwining relation, translating all spectral information from the particle to the diffusion system.

\subsubsection{Spectral gap identity and Gibbs samplers}
Focusing on interacting systems on more specific graphs, a spectral gap identity in the spirit of that in \eqref{eq:aldous-identity} has been proved on 1D geometries also for two models of continuous spins with Gibbs sampler dynamics in \cite{caputo_mixing_2019,caputo_labbe_lacoin_spectral_2022}. Especially the first one of these models studied in \cite{caputo_mixing_2019} is  relevant for the present work. Indeed, when considered on a  general graph $G=(V,(c_{xy})_{x,y\in V})$ and with general site-weights $\alpha=(\alpha_x)_{x\in V}$, the model studied in \cite{caputo_mixing_2019} corresponds to the Gibbs sampler of ${\rm BEP}(G,\alpha)$: instead of letting energies diffuse as in  the  ${\rm BEP}$, the  energies  are instantaneously set to their local beta-like equilibrium among randomly chosen edges.   

While our Theorem \ref{th:spectral-gap-BEP} shows that, as soon as $\alpha_{\rm min}\ge 1$, \eqref{eq:aldous-identity-BEP} for  the  ${\rm BEP}$ holds on \emph{any graph}, rather surprisingly an analogous identity for its Gibbs sampler version fails on specific geometries. Indeed, the proof in \cite{caputo_mixing_2019} crucially relies on the one-dimensional structure of the model, which grants a monotonicity property of the spectral gap's eigenfunction (similar to that for birth-and-death chains). However, as discussed in \cite[Remark 4]{caputo_mixing_2019}, the mean-field version of the same model provides a counterexample to such an identity.

The aforementioned Gibbs sampler version of  the  ${\rm BEP}$ may also be regarded as arising from a procedure of  \textquotedblleft instantaneous thermalization\textquotedblright\ among edges. Instantaneous thermalizations of this kind have been studied for several particle and energy models  (e.g., \cite[Section 6.3]{giardina_duality_2009}), and
it is well-known  that, at least for symmetric systems, this procedure
does not alter qualitative properties of the system as, e.g., the form of the reversible measures and the richness of the duality relations. Nonetheless, it does affect dramatically the eigenstructure of the processes and other quantitative features determining convergence to equilibrium, as the comparison between the model in \cite{caputo_mixing_2019} and our result on  the  BEP illustrates. We emphasize that this example also shows that (self-)duality does not guarantee \emph{per se} the validity of a spectral gap identity.

\subsubsection{Open problems}\label{sec:open-problems}
Besides the problem of quantifying  the sensitivity  with respect to Gibbs-sampler perturbations of the model (as discussed in, e.g., \cite[Section 1.2]{caputo_mixing_2019}; see also the previous paragraph), settling the role of the threshold $\alpha_{\rm min}=1$ remains open.
More specifically, our results provide only partial answers in the regime $$\alpha_{\rm min}\in (0,1)\ .$$
This regime corresponds, roughly speaking, to the case in which particle/energy interaction becomes predominant over the mechanism of independent diffusion. Here, our results state that a spectral gap comparison is robust over the underlying geometry $G=(V,(c_{xy})_{x,y \in V})$, but do not say anything about the sharpness of the first-order dependence on the parameter $\alpha_{\rm min}\in (0,1)$. We emphasize that such a threshold appears also in other related works, e.g., \cite[pp.\ 2453--4]{caputo_mixing_2019}, as well as \cite{barthe_wolff_remarks_2009,labbe2022hydrodynamic}, and also there sharp results are not available.

When  $\alpha_{\rm min}\in (0,1)$, we observe that our proof techniques fail to give  matching upper and lower bounds for  $\gap_{\rm SIP}$ as in \eqref{eq:aldous-identity}. On the one hand, elementary examples with small graphs show that our bounds cannot be generally improved. For instance, consider  the segment $\{1,2,3,4\}$ with $n=4$ sites; in this context, it is not difficult to check that the lower bound in Lemma \ref{lem:min-max} is essentially sharp (take there, e.g.,  $\xi =\ell\delta_1+(k-\ell-1)\delta_4\in \Xi_{k-1}$, with $\ell \approx k/2$). 
On the other hand,  on the complete graph ($c_{xy}\equiv 1/n$)  the spectrum of $-L_k$ is fully explicit and given (without counting multiplicities) by (e.g., \cite{shimakura_equations1977,corujo2020spectrum})
\begin{equation}\nonumber
	\frac{\ell}{n}\left(|\alpha|+\ell-1\right)\ ,\qquad \ell=0,1,\ldots, k\ .
\end{equation}
Hence, the spectral gap identity in \eqref{eq:aldous-identity} holds true for \textit{all} positive site-weights $\alpha=(\alpha_x)_{x\in V}$ in this mean-field setting, without any restriction on $\alpha_{\rm min}>0$. These considerations somehow suggest that, in the regime $\alpha_{\rm min}\in (0,1)$, mean-field features (irrelevant when $\alpha_{\rm min}\ge 1$) of the underlying geometry could be decisive for the validity of a spectral gap identity.

Since spectral gap identities are not available when $\alpha_{\rm min}\in(0,1)$, yet, a sensible open question concerns  the limiting behavior of ${\rm gap}_{\rm SIP}(G,\alpha)$ when taking $\alpha_{\rm min}\ll 1	$.
As studied in \cite{grosskinsky_redig_vafayi_dynamics_2013,bianchi_metastability_2017}, a	   non-trivial metastable picture of ${\rm SIP}_k(G,\alpha)$  emerges in this asymptotic regime $\alpha_x\equiv \alpha \to 0$  on the  timescale $\alpha^{-1}$, provided that $k\in \N$, the total number of particles, is not too large compared to $\alpha^{-1}\gg 1$ (namely,  $\log k \ll \alpha^{-1}$). In addition to this, $\alpha^{-1}$ was also proved to be the slowest (and actually the unique) relevant timescale for this metastable system, in the sense that macroscopic order statistics of the system reached stationarity not later than times $t\approx \alpha^{-1}$.  Given the well-known interpretation of spectral gap as governing the rate to relaxation-to-equilibrium of reversible Markov processes (e.g., \cite{saloff1997lectures}), and since $\gap_{\rm RW}(\alpha)\approx \alpha^{-1}$ as $\alpha\to 0$, this suggests --- at least at the heuristic level --- that $\gap_{\rm SIP}(\alpha)\approx \gap_{\rm RW}(\alpha)$. If this were true, the lower bound in Theorem \ref{th:spectral-gap-SIP} would not capture this because of the extra factor $\alpha_{\rm min}\ll 1$.

We conclude that we expect some comparisons and decompositions, which we developed for the analysis of ${\rm gap}_{\rm SIP}(G,\alpha)$, to turn useful also for other particle systems with gamma-like reversible measures. Examples of such models include the beta binomial splitting process (e.g., \cite{ginkel_redig_sau_duality_2016,pymar_beta_2023}), the generalized harmonic process (e.g., \cite{frassek_harmonic_2022}), as well as their non-conservative variants in contact with either reversible or non-reversible particle reservoirs (e.g., \cite{salez2022universality,salez2023spectral} for the analogue results for ${\rm SEP}$). In particular, intertwining relations, analogous to the ones that we exploit here and which proved to be especially suitable to spectral analyses, are available for all these models. Finally, building on the recent remarkable  developments in \cite{hermon_salez_modified_2023,salez_upgrading_2023,salez2023spectral}, a direction of further investigation  consists of deriving sharp (modified)  log-Sobolev inequalities and curvature bounds for these particle systems. Our   Dirichlet forms' decompositions may find application also here, although we remark that our intertwining relations do not seem to yield (as they do in our spectral-gap analysis) optimal comparisons.  
We plan to address all these questions in future works.

\subsection{Structure of the paper}
The rest of the paper is organized as follows. The upper bound in Theorem \ref{th:spectral-gap-SIP} is proved in Section \ref{sec:UB}. The proof of the lower bound in Theorem \ref{th:spectral-gap-SIP} occupies the whole Section \ref{sec:LB}, which is further divided into four subsections. In Section \ref{sec:BEP-proof}, we present the proof of Theorem \ref{th:spectral-gap-BEP}. Finally, in the \hyperref[sec:appendix]{Appendix}, we discuss a lookdown representation of ${\rm SIP}$.

\section{Proof of upper bound in Theorem \ref{th:spectral-gap-SIP}}\label{sec:UB}

In the remainder of the article, since the graph $G$ is fixed, we may drop the dependence on $G$ for the objects. For example, we express
\begin{equation}\nonumber
	\gap_{\rm SIP}(G,\alpha)=\gap_{\rm SIP}(\alpha)\qquad\text{and}\qquad \gap_{\rm RW}(G,\alpha)=\gap_{\rm RW}(\alpha)\ .
\end{equation}

We start with establishing the upper bound in Theorem \ref{th:spectral-gap-SIP}. The simple idea underlying its proof is that, as in the case of SEP and other systems enjoying a suitable form of consistency/self-duality, observables of a few particles may be \textquotedblleft lifted\textquotedblright\ to observables of many particles, yet yielding coherent statistics. In particular, eigenfunctions for ${\rm RW}(\alpha)$ \textquotedblleft lift\textquotedblright\ to eigenfunctions for ${\rm SIP}_k(\alpha)$; this is rigorously demonstrated  in Lemma \ref{lem:lift-1} below.

For each $k\in \N$, the \emph{annihilation operator} $\ann:\R^{\Xi_{k-1}}\to\R^{\Xi_k}$ is defined, for $g\in \R^{\Xi_{k-1}}$ and $\eta\in\Xi_k$, as
\begin{equation}\label{eq:ann-operator}
	\ann g(\eta):=\sum_{x\in V}\eta_x\, g(\eta-\delta_x)\ ,
\end{equation}
where $\R^{\emptyset}$ is conventionally understood as the space of constants. Intuitively, $\ann g$ evaluates the value at $\eta\in\Xi_k$ by summing up all the values of $g$ evaluated at a $(k-1)$-particle configuration chosen inside $\eta$ uniformly at random. This motivates also to say that $\ann$ corresponds to the operation of \textquotedblleft removing a particle uniformly at random\textquotedblright. Moreover, it holds, for every $k>\ell\in\N$, $g\in\R^{\Xi_\ell}$, and $\eta\in\Xi_k$, that
\begin{equation}\label{eq:composition-of-ann}
	(\ann\circ\cdots\circ \mathfrak{a}_{\ell+1})g(\eta)=\sum_{\zeta\in \Xi_\ell}\left(\prod_{x\in V}{\eta_x \choose \zeta_x}\right) g(\zeta)\ .
\end{equation}
Indeed, the left-hand side of \eqref{eq:composition-of-ann} can be calculated by summing up the values of $g$ evaluated at a $\ell$-particle configuration chosen from $\eta$ uniformly, which is exactly the right-hand side of \eqref{eq:composition-of-ann}.

In this section, we use two important properties (both discussed in detail in the \hyperref[sec:appendix]{Appendix}) of $\ann$, $k\in\N$:
\begin{itemize}
	\item the operator $\ann:\R^{\Xi_{k-1}}\to\R^{\Xi_k}$ is injective;
	\item it holds that
	\begin{equation}\label{eq:intertwining-ann}
		\ann L_{k-1} = L_k \ann\ .
	\end{equation}
\end{itemize}
Especially, \eqref{eq:intertwining-ann} implies that removing a particle at random first and then running the system  (left-hand side) is equivalent, in distribution, to running the system first and then removing a particle at random (right-hand side).

\begin{remark}
	The concept in $\ann:\R^{\Xi_{k-1}}\to\R^{\Xi_k}$ of removing a particle \emph{uniformly at random} is essential in ${\rm SIP}$. This is not the case in other related models such as the interchange process or the binomial splitting process. In these examples, the dynamics restricted to a subset of labeled particles is still Markovian and of the same type as the larger system. Thus, therein, one can fix a specific particle and then lift the remaining particle configuration. Such a property holds for ${\rm SIP}$ not for all subsets of particles, but only for subsets chosen uniformly at random. For more details on this property, we refer the interested reader to the \hyperref[sec:appendix]{Appendix}.
\end{remark}

The identity \eqref{eq:intertwining-ann} has the following consequence. Suppose that $-L_{k-1}g=\lambda g$ holds for some $\lambda \in \R$ and non-zero $g\in\R^{\Xi_{k-1}}$. Then,
\begin{equation}\nonumber
	-L_k (\ann g) = \ann (-L_{k-1}g) = \ann (\lambda g )= \lambda \ann g\ ,
\end{equation}
so that the new function $\ann g\in \R^{\Xi_k}$, which is non-zero since $\ann$ is injective, is an eigenfunction of $-L_k$ subjected to the same eigenvalue $\lambda$. Thus, the operator $\ann$ lifts the eigenspace of $-L_{k-1}$ to the eigenspace of $-L_k$. Then, inductively, the composition $\ann\circ\cdots\circ\mathfrak{a}_{\ell+1}:\R^{\Xi_\ell}\to\R^{\Xi_k}$ lifts the eigenspace of the operator $-L_\ell$ to the eigenspace of $-L_k$ for all $\ell<k\in\N$. Since all the eigenvalues of $-L_\ell$ are also eigenvalues of $-L_k$, it holds in particular that
\begin{equation}\nonumber
	\gap_k(\alpha) \le \gap_\ell(\alpha)\ .
\end{equation}
Considering the special case $\ell=1$ and taking the infimum over all $k\ge2$ in the left-hand side, we have verified the upper bound in Theorem \ref{th:spectral-gap-SIP}:
for every $\alpha=(\alpha_x)_{x\in V}$, 
\begin{equation}\nonumber
	\gap_{\rm SIP}(\alpha)\le \gap_{\rm RW}(\alpha)\ .
\end{equation}

Before concluding this section, we record a lemma which presents the eigenfunction of $-L_k$ lifted from the original eigenfunction of $-A_\alpha$ subjected to the same eigenvalue. 

\begin{lemma}\label{lem:lift-1}
	Let $\psi:V\to \R$ be an eigenfunction for $-A_\alpha$ with eigenvalue $\lambda \ge 0$. Then, for every $k\in \N$, the function $f_{\psi,k}\in\R^{\Xi_k}$ given by
	\begin{equation}\nonumber
		f_{\psi,k}(\eta):=\sum_{x\in V}\psi(x)\, \eta_x\ ,\qquad \eta\in\Xi_k\ ,
	\end{equation}
	is an eigenfunction for $-L_k$ with the same eigenvalue $\lambda\ge 0$.
\end{lemma}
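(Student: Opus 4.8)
The plan is to verify directly that $f_{\psi,k}$ is an eigenfunction of $-L_k$ by a short computation, exploiting the fact that $f_{\psi,k}$ is a ``lift'' of the linear observable built from $\psi$. The cleanest route is to observe that $f_{\psi,k}$ is precisely obtained by applying the composition of annihilation operators to $\psi$: indeed, $f_{\psi,1}(\delta_x)=\psi(x)$, and by the interpretation of $\mathfrak a_k\circ\cdots\circ\mathfrak a_2$ as ``keep one particle chosen uniformly at random'' (cf.\ \eqref{eq:composition-of-ann} with $\ell=1$), one gets
\begin{equation}\nonumber
	(\mathfrak a_k\circ\cdots\circ\mathfrak a_2)\,\psi\,(\eta)=\sum_{x\in V}\binom{\eta_x}{1}\psi(x)=\sum_{x\in V}\eta_x\,\psi(x)=f_{\psi,k}(\eta)\ .
\end{equation}
Since $-A_\alpha=-L_1$ acting on $\R^{\Xi_1}\cong\R^V$ and $-L_1\psi=\lambda\psi$, iterating the intertwining relation \eqref{eq:intertwining-ann} gives $-L_k f_{\psi,k}=-L_k(\mathfrak a_k\circ\cdots\circ\mathfrak a_2)\psi=(\mathfrak a_k\circ\cdots\circ\mathfrak a_2)(-L_1\psi)=\lambda f_{\psi,k}$, and $f_{\psi,k}\neq 0$ because each $\mathfrak a_j$ is injective and $\psi\neq 0$. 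This is really just the special case $\ell=1$ of the general lifting statement already established in the text, together with the explicit identification of the lifted function.

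Alternatively — and this is probably worth including as it is completely self-contained — one can verify the eigenvalue equation by a direct substitution into \eqref{eq:generator-SIP}. Applying $L_k$ to $f_{\psi,k}$, only the single moved particle changes the value of $f_{\psi,k}$, so $f_{\psi,k}(\eta-\delta_x+\delta_y)-f_{\psi,k}(\eta)=\psi(y)-\psi(x)$, giving
\begin{equation}\nonumber
	L_k f_{\psi,k}(\eta)=\sum_{x\in V}\eta_x\sum_{y\in V}c_{xy}\,(\alpha_y+\eta_y)\big(\psi(y)-\psi(x)\big)\ .
\end{equation}
Now split $(\alpha_y+\eta_y)=\alpha_y+\eta_y$. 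The $\alpha_y$-part yields $\sum_x\eta_x\sum_y c_{xy}\alpha_y(\psi(y)-\psi(x))=\sum_x\eta_x\,(A_\alpha\psi)(x)=-\lambda\sum_x\eta_x\psi(x)=-\lambda f_{\psi,k}(\eta)$. The $\eta_y$-part is $\sum_{x,y}\eta_x\eta_y\,c_{xy}(\psi(y)-\psi(x))$, which vanishes because $c_{xy}=c_{yx}$ makes the summand antisymmetric under swapping $x\leftrightarrow y$. Hence $L_k f_{\psi,k}=-\lambda f_{\psi,k}$, i.e.\ $-L_k f_{\psi,k}=\lambda f_{\psi,k}$; and $f_{\psi,k}\not\equiv 0$ on $\Xi_k$ since $\psi\not\equiv 0$ and, picking $x$ with $\psi(x)\neq\psi(\bar x)$ for some $\bar x$, the configurations $k\delta_x$ and $k\delta_{\bar x}$ already separate the values of $f_{\psi,k}$.

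I expect essentially no obstacle here: the statement is a direct consequence of the intertwining relation \eqref{eq:intertwining-ann} specialized to $\ell=1$, which has already been set up in this section, and the only mild point to be careful about is the non-vanishing of $f_{\psi,k}$ on the restricted configuration space $\Xi_k$ (not on all of $\N_0^V$), which follows from injectivity of the $\mathfrak a_j$'s or, in the direct approach, from the explicit separation of values just noted. The antisymmetry cancellation in the direct computation is the only ``trick,'' and it is the same mechanism that makes the reversible measure a product of gamma-marginals. I would present the direct computation as the main proof, perhaps remarking that it also follows from the general lifting principle via \eqref{eq:composition-of-ann}.
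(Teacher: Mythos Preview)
Your first argument --- writing $f_{\psi,k}=(\mathfrak a_k\circ\cdots\circ\mathfrak a_2)g$ with $g(\delta_x)=\psi(x)$, iterating the intertwining relation \eqref{eq:intertwining-ann}, and invoking injectivity of each $\mathfrak a_j$ for non-vanishing --- is exactly the paper's proof. Your second, direct computation is not in the paper and constitutes a valid self-contained alternative: it bypasses the annihilation machinery and the Appendix entirely, replacing the structural intertwining by the explicit antisymmetry cancellation of the $\sum_{x,y}\eta_x\eta_y\,c_{xy}(\psi(y)-\psi(x))$ term. Either route is fine; the paper favors the former because it exhibits the general lifting mechanism that the whole section is built on, whereas your direct approach is more elementary but specific to linear observables. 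One small quibble on non-vanishing in the direct route: your separation-of-values argument assumes $\psi$ non-constant, which fails when $\lambda=0$; in that case $f_{\psi,k}$ is the non-zero constant $k\cdot c$ (where $\psi\equiv c\neq 0$), so the claim still holds --- or simply evaluate $f_{\psi,k}(k\delta_x)=k\psi(x)$ at any $x$ with $\psi(x)\neq 0$, which covers all cases at once.
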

\begin{proof}
	Define $g\in\R^{\Xi_1}$ as $g(\delta_x):=\psi(x)$. It is clear that $g$ becomes an eigenfunction for $-L_1$ with eigenvalue $\lambda\ge0$. Substituting $\ell=1$ in \eqref{eq:composition-of-ann}, we obtain that
	\begin{equation}\nonumber
		(\ann \circ \cdots \circ \mathfrak{a}_2)g(\eta)= \sum_{x\in V}\eta_x\, g(\delta_x)=\sum_{x\in V}\eta_x\, \psi(x)\ ,
	\end{equation}
	so that we have
	\begin{equation}\label{eq:f-ann-composition}
		f_{\psi,k}=(\ann \circ \cdots \circ \mathfrak{a}_2)g \ .
	\end{equation}
	Since $g\ne0$ (which follows from the fact that $\psi$ is an eigenfunction) and the operators $\mathfrak{a}_2$ through $\ann$ are all injective, $f_{\psi,k}$ is a non-zero function.
	
	It remains to verify that $-L_kf_{\psi,k}=\lambda f_{\psi,k}$ holds. This is an easy consequence of \eqref{eq:f-ann-composition} and the intertwining relation \eqref{eq:intertwining-ann}:
	\begin{equation}\nonumber
		-L_kf_{\psi,k}=-L_k(\ann\circ\cdots\circ \mathfrak{a}_2)g=(\ann\circ\cdots\circ\mathfrak{a}_2)(-L_1g) = (\ann\circ\cdots\circ\mathfrak{a}_2)(\lambda g) = \lambda f_{\psi,k} \ .
	\end{equation}
	Thus, we conclude the proof.
\end{proof}


\section{Proof of lower bound in Theorem \ref{th:spectral-gap-SIP}}\label{sec:LB}

In this section, we tackle the lower bound in Theorem \ref{th:spectral-gap-SIP}; namely, we prove  
\begin{equation}\label{eq:LB-k}
	(1\wedge \alpha_{\rm min}) \gap_{\rm RW}(\alpha) \le \gap_{\rm SIP} (\alpha)\ .
\end{equation}

\subsection{Preliminaries}
In Section \ref{sec:UB}, we demonstrated that the eigenfunctions of the operator $-L_{k-1}$ are lifted via $\ann$ to the eigenfunctions of the operator $-L_k$, thus,  ensuring that the eigenvalues of $-L_{k-1}$ are also eigenvalues of $-L_k$. Hence, to find the remaining eigenvalues of $-L_k$ that do not come from the lifting property of $\ann$, since $-L_k$ is self-adjoint, it suffices to investigate functions $f\in L^2(\mu_{\alpha,k})$ (cf.\ \eqref{eq:inner-product}) that belong to the orthogonal complement of the image of the operator $\ann$. Since $\ann$ injectively takes all the functions in $\R^{\Xi_{k-1}}$ into $\R^{\Xi_k}$, such an $f$ should satisfy a certain mean-zero condition subjected to each $(k-1)$-particle configuration:
\begin{equation}\nonumber
	\sum_{x\in V} \left(\xi_x+\alpha_x\right)f(\xi+\delta_x)=0 \  ,\qquad  \xi\in\Xi_{k-1}\ .
\end{equation}
In this sense, it is natural to define, for $k \in \N$, the  \emph{creation operator} $\cre:\R^{\Xi_k}\to\R^{\Xi_{k-1}}$ as follows: for all $f\in\R^{\Xi_k}$ and $\xi\in\Xi_{k-1}$,
\begin{equation}\nonumber
	\cre f(\xi):=\sum_{x\in V}\left(\xi_x+\alpha_x\right) f(\xi+\delta_x)\ ,
\end{equation}
where, again, $\R^{\emptyset}$ is considered as the space of constants.

It turns out that the two operators $\ann$ and $\cre$ are indeed closely related to each other, as the following proposition shows. For two functions $f,g\in \R^{\Xi_k}$, we define the inner product $\nscalar{f}{g}_{\alpha,k}$ as
\begin{equation}\label{eq:inner-product}
	\scalar{f}{g}_{\alpha,k}:=\sum_{\eta\in\Xi_k}\mu_{\alpha,k}(\eta)\, f(\eta)\,g(\eta)\ ,
\end{equation}
and let $L^2(\mu_{\alpha,k})$ denote the corresponding $L^2$-space of functions on $\Xi_k$.
\begin{proposition}\label{prop:ann-cre}
	The following two properties are valid for all $k\in\N$:
	\begin{enumerate}
		\item[(a)] (\emph{adjoint property}) for all $f\in\R^{\Xi_k}$ and $g\in\R^{\Xi_{k-1}}$,
		\begin{equation}\nonumber
			\nscalar{\ann g}{f}_{\alpha,k}=\frac{k}{|\alpha|+k-1}\nscalar{g}{\cre f}_{\alpha,k-1} \ ;
		\end{equation}
		\item[(b)] (\emph{orthogonal decomposition}) $L^2(\mu_{\alpha,k})={\rm Im}\,\ann \oplus_{\perp} {\rm Ker}\,\cre$.
	\end{enumerate}
\end{proposition}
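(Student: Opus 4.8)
The plan is to obtain (b) as a formal consequence of (a), so the only real content is the adjoint identity in (a), which I would prove by a direct change of variables in the defining sums, the whole matter reducing to one elementary identity between the reversible measures.

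For (a), I would start from the definitions \eqref{eq:ann-operator} of $\ann$ and \eqref{eq:inner-product} of the inner product and write
\[
\nscalar{\ann g}{f}_{\alpha,k}=\sum_{\eta\in\Xi_k}\mu_{\alpha,k}(\eta)\,f(\eta)\sum_{x\in V}\eta_x\,g(\eta-\delta_x)\ ,
\]
and then reindex the right-hand side by setting $\xi=\eta-\delta_x\in\Xi_{k-1}$ (the $x$-term being zero unless $\eta_x\ge 1$), turning it into $\sum_{\xi\in\Xi_{k-1}}g(\xi)\sum_{x\in V}\mu_{\alpha,k}(\xi+\delta_x)\,(\xi_x+1)\,f(\xi+\delta_x)$. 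Everything then rests on the identity
\[
\mu_{\alpha,k}(\xi+\delta_x)\,(\xi_x+1)=\frac{k}{|\alpha|+k-1}\,(\alpha_x+\xi_x)\,\mu_{\alpha,k-1}(\xi)\ ,\qquad \xi\in\Xi_{k-1}\ ,\ x\in V\ ,
\]
which follows from the product form \eqref{eq:inv-msr} of $\mu_{\alpha,k}$ together with $\Gamma(\alpha_x+\xi_x+1)=(\alpha_x+\xi_x)\,\Gamma(\alpha_x+\xi_x)$ and the computation $Z_{\alpha,k-1}/Z_{\alpha,k}=k/(|\alpha|+k-1)$. Substituting this in and recognizing $\sum_{x\in V}(\alpha_x+\xi_x)\,f(\xi+\delta_x)=\cre f(\xi)$ produces exactly $\frac{k}{|\alpha|+k-1}\,\nscalar{g}{\cre f}_{\alpha,k-1}$, as claimed; the conventions on $\Xi_0$ and $\R^\emptyset$ make the statement and proof valid already for $k=1$.

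For (b), since $\ann$ is injective (as recorded in Section \ref{sec:UB}), ${\rm Im}\,\ann$ is a genuine subspace of the finite-dimensional Hilbert space $L^2(\mu_{\alpha,k})$, so it suffices to identify its orthogonal complement. A function $f\in\R^{\Xi_k}$ satisfies $\nscalar{\ann g}{f}_{\alpha,k}=0$ for all $g\in\R^{\Xi_{k-1}}$ if and only if, by part (a) and because the prefactor $\frac{k}{|\alpha|+k-1}$ is nonzero, $\nscalar{g}{\cre f}_{\alpha,k-1}=0$ for all $g$, i.e.\ if and only if $\cre f=0$. Hence $({\rm Im}\,\ann)^{\perp}={\rm Ker}\,\cre$, and finite-dimensionality gives the orthogonal decomposition $L^2(\mu_{\alpha,k})={\rm Im}\,\ann\oplus_{\perp}{\rm Ker}\,\cre$.

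The main (and essentially only) obstacle is bookkeeping in (a): carrying out the reindexing $\eta\mapsto\xi=\eta-\delta_x$ cleanly and, above all, getting the gamma-function manipulation right, including the normalization ratio $Z_{\alpha,k-1}/Z_{\alpha,k}=k/(|\alpha|+k-1)$, which is precisely what produces the constant appearing in the adjoint relation. Once that identity is in hand, both parts follow with no further difficulty.
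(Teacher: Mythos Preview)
Your proof is correct and follows essentially the same route as the paper: part (a) is obtained by the identical reindexing $\eta\mapsto\xi=\eta-\delta_x$ together with the measure identity $\mu_{\alpha,k}(\xi+\delta_x)(\xi_x+1)=\frac{k}{|\alpha|+k-1}(\alpha_x+\xi_x)\,\mu_{\alpha,k-1}(\xi)$, and part (b) is deduced from (a). Your argument for (b) is in fact slightly slicker than the paper's---you read off $({\rm Im}\,\ann)^\perp={\rm Ker}\,\cre$ directly from the adjoint identity, whereas the paper shows orthogonality in one direction and then closes with a dimension count using injectivity of $\ann$; note that your mention of injectivity is actually superfluous for your version of the argument, since the image of a linear map is always a subspace.
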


\begin{proof} We fix $f\in\R^{\Xi_k}$ and $g\in\R^{\Xi_{k-1}}$. Then,
	\begin{equation}\nonumber
		\nscalar{\ann g}{f}_{\alpha,k}=\sum_{\eta\in\Xi_k}\mu_{\alpha,k}(\eta)\,\ann g(\eta)\,f(\eta) = \sum_{x\in V} \sum_{\eta\in\Xi_k:\,\eta_x\ge1}\mu_{\alpha,k}(\eta)\, \eta_x\, f(\eta)\, g(\eta-\delta_x)\ .
	\end{equation}
	Rearranging by substituting $\xi:=\eta-\delta_x$, this becomes
	\begin{align}\nonumber
		&\sum_{x\in V}\sum_{\xi\in\Xi_{k-1}} \mu_{\alpha,k}(\xi+\delta_x) \left(\xi_x+1\right) f(\xi+\delta_x)\,  g(\xi)\\
		\nonumber
		&\qquad=\sum_{x\in V}\sum_{\xi\in\Xi_{k-1}} \frac{k\left(\alpha_x+\xi_x\right)}{|\alpha|+k-1}\,\mu_{\alpha,k-1}(\xi)\, f(\xi+\delta_x)\, g(\xi)\ ,
	\end{align}
	where in the equality we used \eqref{eq:inv-msr}. Thus, applying the definitions of $\cre$ and $\nscalar{\cdot}{\cdot}_{\alpha,k-1}$, the right-hand side equals
	\begin{equation}\nonumber
		\sum_{\xi\in \Xi_{k-1}}\frac{k}{|\alpha|+k-1}\, \mu_{\alpha,k-1}(\xi)\, \cre f(\xi)\,g(\xi) = \frac{k}{|\alpha|+k-1}\,\nscalar{g}{\cre f}_{\alpha,k-1}\ ,
	\end{equation}
	which concludes the proof of part (a).
	
	\noindent (b) Suppose that $f\in {\rm Im}\,\ann$ and $g \in {\rm Ker}\,\cre$. Then, by part (a), since $f=\ann h$ for some $h\in\R^{\Xi_{k-1}}$,
	\begin{equation}\nonumber
		\nscalar{f}{g}_{\alpha,k}=\nscalar{\ann h}{g}_{\alpha,k}=\frac{k}{|\alpha|+k-1}\nscalar{h}{\cre g}_{\alpha,k-1}=0\ ,
	\end{equation}
	where the last equality holds since $g\in{\rm Ker}\,\cre$. This proves that ${\rm Im}\,\ann$ and ${\rm Ker}\,\cre$ are orthogonal. Moreover,  ${\rm dim}\,{\rm Im}\,\ann=|\Xi_{k-1}|$ since $\ann$ is injective, and ${\rm dim}\,{\rm Ker}\,\cre \ge |\Xi_k|-|\Xi_{k-1}|$ by the dimension theorem. Thus, by orthogonality, we conclude that ${\rm dim}\,{\rm Ker}\,\cre = |\Xi_k|-|\Xi_{k-1}|$, and that $L^2(\mu_{\alpha,k})={\rm Im}\,\ann \oplus_\perp {\rm Ker}\,\cre$.
\end{proof}

A simple consequence of the previous proposition and \eqref{eq:intertwining-ann} is that the following identity holds: for $k\in\N$,
\begin{equation}\label{eq:intertwining-cre}
	\cre L_k = L_{k-1} \cre\ .
\end{equation}
Indeed, for all $f\in\R^{\Xi_k}$ and $g\in\R^{\Xi_{k-1}}$, we calculate using part (a) of Proposition \ref{prop:ann-cre} as
\begin{equation}\nonumber
	\nscalar{g}{\cre L_kf}_{\alpha,k-1}=\frac{|\alpha|+k-1}{k}\, \nscalar{\ann g}{L_kf}_{\alpha,k}=\frac{|\alpha|+k-1}{k}\, \nscalar{L_k \ann g}{f}_{\alpha,k}\ ,
\end{equation}
where the second identity holds since $L_k$ is self-adjoint on $L^2(\mu_{\alpha,k})$. Then, by \eqref{eq:intertwining-ann} and again by part (a) of Proposition \ref{prop:ann-cre}, this equals
\begin{equation}\nonumber
	\frac{|\alpha|+k-1}{k}\, \nscalar{\ann L_{k-1}g}{f}_{\alpha,k} = \nscalar{L_{k-1}g}{\cre f}_{\alpha,k-1} =  \nscalar{g}{L_{k-1}\cre f}_{\alpha,k-1}\ ,
\end{equation}
where the last equality follows from the fact that $L_{k-1}$ is self-adjoint on $L^2(\mu_{\alpha,k-1})$. Thus, we have proved that
\begin{equation}\nonumber
	\nscalar{g}{\cre L_k f}_{\alpha,k-1} = \nscalar{g}{L_{k-1}\cre f}_{\alpha,k-1}
\end{equation}
holds for all $g\in\R^{\Xi_{k-1}}$ and $f\in\R^{\Xi_k}$, which indeed impiles \eqref{eq:intertwining-cre}.

According to part (b) of Proposition \ref{prop:ann-cre}, we easily obtain the following lemma:

\begin{lemma}\label{lem:eigenfunction-decomposition}
	For $k\in\N$, we have
	\begin{equation}\nonumber
		-L_kf=\lambda f \qquad \text{if and only if} \qquad f\in{\rm Im}\,\ann \text{ or } f\in {\rm Ker}\,\cre\ .
	\end{equation}
\end{lemma}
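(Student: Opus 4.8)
The plan is to deduce the lemma from the orthogonal decomposition in Proposition \ref{prop:ann-cre}(b) together with the two intertwining relations \eqref{eq:intertwining-ann} and \eqref{eq:intertwining-cre}, plus the fact (recorded in the introduction) that $-L_k$ is diagonalizable, being self-adjoint on $L^2(\mu_{\alpha,k})$. The whole point is that $L_k$ leaves each of the two summands ${\rm Im}\,\ann$ and ${\rm Ker}\,\cre$ invariant, so the spectral problem for $-L_k$ decouples across the decomposition $L^2(\mu_{\alpha,k})={\rm Im}\,\ann\oplus_\perp{\rm Ker}\,\cre$.

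First I would verify the invariance. If $f=\ann g\in{\rm Im}\,\ann$, then by \eqref{eq:intertwining-ann} we get $L_k f = L_k\ann g = \ann L_{k-1} g\in{\rm Im}\,\ann$. If $f\in{\rm Ker}\,\cre$, then by \eqref{eq:intertwining-cre} we get $\cre L_k f = L_{k-1}\cre f = 0$, so $L_k f\in{\rm Ker}\,\cre$. Hence $L_k$ maps each of the two orthogonal subspaces into itself. Next, take any eigenfunction $f$ of $-L_k$ with eigenvalue $\lambda$, and write $f=f_1+f_2$ with $f_1\in{\rm Im}\,\ann$ and $f_2\in{\rm Ker}\,\cre$ its (orthogonal) components via Proposition \ref{prop:ann-cre}(b). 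By the invariance just shown, $-L_k f_1\in{\rm Im}\,\ann$ and $-L_k f_2\in{\rm Ker}\,\cre$; since $(-L_k f_1)+(-L_k f_2)=-L_k f=\lambda f = \lambda f_1+\lambda f_2$ and the decomposition is direct, uniqueness forces $-L_k f_1=\lambda f_1$ and $-L_k f_2=\lambda f_2$. Thus every eigenspace of $-L_k$ is the orthogonal direct sum of its intersections with ${\rm Im}\,\ann$ and with ${\rm Ker}\,\cre$; equivalently, one may select a basis of eigenfunctions, each lying entirely in ${\rm Im}\,\ann$ or in ${\rm Ker}\,\cre$. The converse implication is trivial: any eigenfunction of the restriction of $-L_k$ to either subspace is a fortiori an eigenfunction of $-L_k$.

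I do not expect a genuine obstacle here: the lemma is elementary linear algebra once \eqref{eq:intertwining-ann}, \eqref{eq:intertwining-cre}, and Proposition \ref{prop:ann-cre}(b) are available, all of which are already established in the excerpt. The only point deserving a moment's care is the slightly informal reading of the statement — an arbitrary eigenfunction need not lie wholly in ${\rm Im}\,\ann$ or in ${\rm Ker}\,\cre$, but its two components do, with the same eigenvalue, which is exactly what the $L_k$-invariance of the summands in $L^2(\mu_{\alpha,k})={\rm Im}\,\ann\oplus_\perp{\rm Ker}\,\cre$ encodes, and this is all that is used in the subsequent induction on $k$.
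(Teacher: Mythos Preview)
Your argument is correct and is precisely the natural expansion of what the paper leaves implicit: the paper states the lemma as an immediate consequence of Proposition~\ref{prop:ann-cre}(b) without further details, and your use of the invariance of the two summands under $L_k$ (via \eqref{eq:intertwining-ann} and \eqref{eq:intertwining-cre}) to decouple the eigenvalue problem is exactly the intended reasoning. Your remark on the literal reading of the statement is also well taken and correctly resolved; the subsequent application in \eqref{eq:variational-gap-final} indeed only needs that the infimum may be restricted to eigenfunctions lying entirely in one summand or the other.
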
 

\subsection{Decomposition of Dirichlet forms}
For $k\in\N$, we define the \emph{Dirichlet form} $\cE_{\alpha,k}(f)$ evaluated at $f\in\R^{\Xi_k}$ as
\begin{equation}\label{eq:dir-form-SIP}
	\cE_{\alpha,k}(f):=\nscalar{f}{-L_k f}_{\alpha,k}\ .
\end{equation}
Moreover, we let $\cD_{\alpha}(\phi)$ denote the Dirichlet form at $\phi\in\R^V$ subjected to ${\rm RW}(\alpha)$:
\begin{equation}\label{eq:dir-form-RW}
	\cD_{\alpha}(\phi):= \sum_{x\in V}\frac{\alpha_x}{|\alpha|}\, \phi(x)\left(-A_\alpha \phi\right)(x) = \frac1{|\alpha|} \sum_{x,y\in V} \alpha_x \alpha_y\, c_{xy}\, \phi(x) \left(\phi(x)-\phi(y)\right)\ .
\end{equation}
Then, we have the following variational representation of the spectral gap (e.g.,  \cite{saloff1997lectures}):
\begin{equation}\label{eq:gap-var-formula-k}
	\gap_{k}(\alpha)=\inf_{f\in\R^{\Xi_k}:\,f\ne \text{const.}}\frac{\cE_{\alpha,k}(f)}{{\rm Var}_{\alpha,k}(f)}\ ,
\end{equation}
where ${\rm Var}_{\alpha,k}(f):=\nscalar{f}{f}_{\alpha,k}-\langle f\rangle_{\alpha,k}^2$, with $\langle f\rangle_{\alpha,k}:=\nscalar{f}{1}_{\alpha,k}$. Similarly, it holds that
\begin{equation}\label{eq:gap-var-formula-RW}
	\gap_{\rm RW}(\alpha)=\inf_{\phi\in\R^V:\,\phi\ne \text{const.}}\frac{\cD_{\alpha}(\phi)}{{\rm var}_{\alpha}(\phi)}\ ,
\end{equation}
where ${\rm var}_{\alpha}(\phi):=\nscalar{\phi}{\phi}_{L^2(\alpha)}-\langle \phi\rangle_{L^2(\alpha)}^2$, with $\langle \phi\rangle_{L^2(\alpha)}:=\nscalar{\phi}{1}_{L^2(\alpha)}$. Here, $L^2(\alpha)$ denotes the $L^2$ function space on $V$ with respect to the probability measure $(\alpha_x/|\alpha|)_{x\in V}$.

Suppose that $f\in {\rm Ker}\,\cre$ for $k\in \N$. Then, since $\ann 1(\eta)=\sum_{x\in V}\eta_x= k$, we calculate
\begin{equation}\nonumber
	\nscalar{f}{1}_{\alpha,k}=\frac1k\,\nscalar{f}{\ann1}_{\alpha,k}=\frac1{|\alpha|+k-1}\,\nscalar{\cre f}{1}_{\alpha,k-1}=0 \ ,
\end{equation}
where the second equality holds by part (a) of Proposition \ref{prop:ann-cre}. This implies that the expectation of $f$ with respect to $\mu_{\alpha,k}$ is zero, and thus
\begin{equation}\label{eq:var-ker}
	{\rm Var}_{\alpha,k}(f)=\sum_{\eta\in\Xi_k}\mu_{\alpha,k}(\eta)\,f(\eta)^2\ .
\end{equation}

In this subsection, we prove the following lemma, which is partially motivated from \cite{hermon_version_2019}. The idea of proof is to decompose on ${\rm Ker}\,\cre$ the $k$-Dirichlet form $\cE_{\alpha,k}(\cdot)$ into lower-order Dirichlet forms $\cD_\beta(\cdot)$ for some suitably chosen $\beta=\beta(\alpha)$.

\begin{lemma}\label{lem:Dirichlet-form-decomposition}
	Suppose that $f\in{\rm Ker}\,\cre$. Then, we have
	\begin{equation}\nonumber
		\cE_{\alpha,k}(f) \ge k \left( \inf_{\xi\in\Xi_{k-1}}\gap_{\rm RW}(\alpha+\xi)\right) {\rm Var}_{\alpha,k}(f) \ .
	\end{equation}
\end{lemma}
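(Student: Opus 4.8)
The plan is to rewrite the Dirichlet form $\cE_{\alpha,k}(f)$ for $f\in{\rm Ker}\,\cre$ by encoding the dynamics of the ``$k$th particle'' conditionally on the positions $\xi\in\Xi_{k-1}$ of the remaining particles. Concretely, I would start from the jump-rate expression
\[
	\cE_{\alpha,k}(f)=\tfrac12\sum_{\eta\in\Xi_k}\mu_{\alpha,k}(\eta)\sum_{x,y\in V}\eta_x\,c_{xy}\,(\alpha_y+\eta_y)\,\big(f(\eta-\delta_x+\delta_y)-f(\eta)\big)^2\,,
\]
and reorganize the sum over $(\eta,x,y)$ by recording the configuration $\xi=\eta-\delta_x\in\Xi_{k-1}$ obtained by deleting the jumping particle. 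The combinatorial identity I expect to need is the analogue of \eqref{eq:composition-of-ann} with $\ell=k-1$: each $\eta\in\Xi_k$ arises from a $\xi\in\Xi_{k-1}$ by adding one particle, and the weight $\mu_{\alpha,k}(\eta)\,\eta_x$ can be rewritten (using \eqref{eq:inv-msr}) in terms of $\mu_{\alpha,k-1}(\xi)$ times $(\alpha_x+\xi_x)$ up to the global factor $\tfrac{k}{|\alpha|+k-1}$ already seen in Proposition \ref{prop:ann-cre}(a). After this change of variables, for each fixed $\xi$ the inner double sum over $x,y$ becomes exactly (a constant multiple of) the Dirichlet form $\cD_{\alpha+\xi}(\phi_\xi)$ of ${\rm RW}(\alpha+\xi)$ evaluated at the ``slice'' function $\phi_\xi(x):=f(\xi+\delta_x)$, because the ${\rm RW}(\alpha+\xi)$ jump rate from $x$ to $y$ is $c_{xy}(\alpha_y+\xi_y)$ and its reversible measure is proportional to $(\alpha_x+\xi_x)_{x\in V}$. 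This is the key structural step, and identifying the normalization so that the prefactors line up cleanly is where I expect the bookkeeping to be delicate.

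Once $\cE_{\alpha,k}(f)=\sum_{\xi\in\Xi_{k-1}}w_\xi\,\cD_{\alpha+\xi}(\phi_\xi)$ for appropriate nonnegative weights $w_\xi$, I would invoke the variational characterization \eqref{eq:gap-var-formula-RW}, which gives $\cD_{\alpha+\xi}(\phi_\xi)\ge \gap_{\rm RW}(\alpha+\xi)\,{\rm var}_{\alpha+\xi}(\phi_\xi)\ge \big(\inf_{\xi'\in\Xi_{k-1}}\gap_{\rm RW}(\alpha+\xi')\big)\,{\rm var}_{\alpha+\xi}(\phi_\xi)$ slice by slice. Here one must be careful: \eqref{eq:gap-var-formula-RW} bounds $\cD$ below by the gap times the \emph{variance} of $\phi_\xi$ with respect to the measure $(\alpha_x+\xi_x)/(|\alpha|+k-1)$, i.e.\ $\cD_{\alpha+\xi}(\phi_\xi)\ge \gap\cdot\big(\sum_x\tfrac{\alpha_x+\xi_x}{|\alpha|+k-1}\phi_\xi(x)^2-(\cdots)^2\big)$, and the subtracted square of the mean only helps us (it makes the right-hand side smaller), so it may simply be dropped after summing. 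Summing $w_\xi\,{\rm var}_{\alpha+\xi}(\phi_\xi)$ over $\xi$ and undoing the change of variables should reproduce $\sum_{\eta}\mu_{\alpha,k}(\eta)f(\eta)^2$ up to the factor $k$ — and this is exactly the point where $f\in{\rm Ker}\,\cre$ is used, since by \eqref{eq:var-ker} that sum equals ${\rm Var}_{\alpha,k}(f)$, and moreover the ${\rm Ker}\,\cre$ condition $\sum_x(\alpha_x+\xi_x)\phi_\xi(x)=0$ is precisely what makes the (unwanted) mean-squared terms in each slice vanish, so no information is lost in dropping them.

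The main obstacle, as flagged above, is the two-fold accounting of the factor $k$: the change of variables $\eta\mapsto(\xi,x)$ over-counts each $\eta$ by $k$ (there are $k$ particles that could be the ``last'' one), and simultaneously the kernel-of-$\cre$ normalization introduces the ratio $\tfrac{k}{|\alpha|+k-1}$; one has to verify that, after combining the Dirichlet-form side and the variance side, these factors assemble into exactly the single $k$ claimed in the statement and nothing else. I would handle this by first proving the clean ``slice decomposition'' identity $\cE_{\alpha,k}(f)=\tfrac{k}{|\alpha|+k-1}\sum_{\xi\in\Xi_{k-1}}\mu_{\alpha,k-1}(\xi)\,(|\alpha|+k-1)\,\cD_{\alpha+\xi}(\phi_\xi)=k\sum_{\xi}\mu_{\alpha,k-1}(\xi)\,\cD_{\alpha+\xi}(\phi_\xi)$ as an exact equality (no inequalities yet), with the matching identity $\sum_\eta\mu_{\alpha,k}(\eta)f(\eta)^2=\sum_\xi\mu_{\alpha,k-1}(\xi)\,\sum_x\tfrac{\alpha_x+\xi_x}{|\alpha|+k-1}\phi_\xi(x)^2$; then applying \eqref{eq:gap-var-formula-RW} slice-by-slice and using \eqref{eq:var-ker} finishes the proof in one line. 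A small side check is that the diagonal terms $x=y$ and the cases $\eta_x=0$ contribute nothing, which is immediate since $c_{xy}(\alpha_y-\alpha_y)$-type differences or the factor $\eta_x$ kill them.
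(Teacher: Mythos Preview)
Your proposal is correct and follows essentially the same route as the paper: change variables $\eta\mapsto(\xi,x)$ with $\xi=\eta-\delta_x$, use the measure identity $\mu_{\alpha,k}(\xi+\delta_x)(\xi_x+1)=\tfrac{k}{|\alpha|+k-1}\,\mu_{\alpha,k-1}(\xi)(\alpha_x+\xi_x)$ to obtain the exact slice decomposition $\cE_{\alpha,k}(f)=k\sum_{\xi}\mu_{\alpha,k-1}(\xi)\,\cD_{\alpha+\xi}(\phi_\xi)$, apply \eqref{eq:gap-var-formula-RW} slice by slice, and use $f\in{\rm Ker}\,\cre$ to kill the mean of each $\phi_\xi$ so that the variances reassemble into ${\rm Var}_{\alpha,k}(f)$. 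The bookkeeping you flag as delicate does resolve exactly as you predict (the paper writes the prefactor as $(|\alpha|+k-1)\,Z_{\alpha,k-1}/Z_{\alpha,k}$, which equals $k$), and your two ``clean identities'' are precisely the ones established there.
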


\begin{proof}
	For $k\in \N$ and $f\in{\rm Ker}\,\cre$, we calculate $\cE_{\alpha,k}(f)=\nscalar{f}{-L_kf}_{\alpha,k}$ as
	\begin{align}\nonumber
		& \sum_{\eta\in\Xi_k} \sum_{x,y\in V} \mu_{\alpha,k}(\eta)\, f(\eta)\, \eta_x\, c_{xy} \left(\alpha_y+\eta_y\right) \left(f(\eta)-f(\eta-\delta_x +\delta_y)\right) \\
		\nonumber
		& =\sum_{x\in V}\left(\sum_{\eta\in\Xi_k:\,\eta_x\ge1} \mu_{\alpha,k}(\eta)\, \eta_x\, f(\eta) \sum_{y\in V}c_{xy} \left(\alpha_y + \eta_y\right) \left(f(\eta)-f(\eta-\delta_x+\delta_y)\right)\right)\ .
	\end{align}
	Writing $\eta-\delta_x=:\xi\in\Xi_{k-1}$ for each fixed $x\in V$, the right-hand side can be rewritten as
	\begin{equation}\nonumber
		\sum_{x\in V}\sum_{\xi\in\Xi_{k-1}}\mu_{\alpha,k}(\xi+\delta_x)\left(\xi_x +1\right)f(\xi+\delta_x) \sum_{y\in V} c_{xy} \left(\alpha_y + \xi_y\right)\left(f(\xi+\delta_x)-f(\xi+\delta_y)\right)\ .
	\end{equation}
	By \eqref{eq:inv-msr}, it holds that
	\begin{equation}\label{eq:inv-msr-transform}
		\mu_{\alpha,k}(\xi+\delta_x)\left(\xi_x+1\right)=\frac{Z_{\alpha,k-1}}{Z_{\alpha,k}}\,\mu_{\alpha,k-1}(\xi) \left(\alpha_x +\xi_x\right)\ , \qquad x\in V\ , \ \xi\in\Xi_{k-1}\ .
	\end{equation}
	Thus, by using the shortcut $f_\xi(x):=f(\xi+\delta_x)$ for $x\in V$ and $\xi\in\Xi_{k-1}$, we get
	\begin{equation}\nonumber
		\cE_{\alpha,k}(f) =\frac{Z_{\alpha,k-1}}{Z_{\alpha,k}}\sum_{x\in V}\sum_{\xi\in\Xi_{k-1}}\mu_{\alpha,k-1}(\xi)\,f_\xi(x)\sum_{y\in V} c_{xy}\left(\alpha_x +\xi_x\right)\left(\alpha_y + \xi_y\right)\left(f_\xi (x)-f_\xi (y)\right)\ .
	\end{equation}
	Renormalizing and rewriting, this is equal to 
	\begin{equation}\nonumber
		\left(|\alpha|+k-1\right)\frac{Z_{\alpha,k-1}}{Z_{\alpha,k}} \sum_{\xi\in\Xi_{k-1}}\mu_{\alpha,k-1}(\xi)\sum_{x,y\in V} f_\xi (x)\,\frac{c_{xy}\left(\alpha_x +\xi_x\right)\left(\alpha_y + \xi_y\right)}{|\alpha|+k-1}\left(f_\xi (x)-f_\xi (y)\right) \ .
	\end{equation}
	Adopting the notation in \eqref{eq:dir-form-RW} to rewrite the second summation above, altogether we get
	\begin{equation}\label{eq:311}
		\cE_{\alpha,k}(f)= \left(|\alpha|+k-1\right)\frac{Z_{\alpha,k-1}}{Z_{\alpha,k}}\sum_{\xi\in\Xi_{k-1}}\mu_{\alpha,k-1}(\xi)\,\cD_{\alpha+\xi}(f_\xi)\ .
	\end{equation}
	By \eqref{eq:gap-var-formula-RW} with $\alpha+\xi$ in place of $\alpha$, the Dirichlet form $\cD_{\alpha+\xi}(f_\xi)$ on the right-hand side of \eqref{eq:311} is bounded from below by $$\gap_{\rm RW}(\alpha+\xi)\, {\rm var}_{\alpha+\xi}(f_\xi)\ .$$ Thus, we have verified that
	\begin{equation}\label{eq:Diri-decomposition}
		\cE_{\alpha,k}(f)\ge \left(|\alpha|+k-1\right)\frac{Z_{\alpha,k-1}}{Z_{\alpha,k}}\sum_{\xi\in\Xi_{k-1}}\mu_{\alpha,k-1}(\xi)\, \gap_{\rm RW}(\alpha+\xi)\, {\rm var}_{\alpha+\xi}(f_\xi)\ .
	\end{equation}
	Observe that since $f\in {\rm Ker}\,\cre$, we have
	\begin{align}\nonumber
		{\rm var}_{\alpha+\xi}(f_\xi) &=\sum_{x\in V}\frac{\alpha_x+\xi_x}{|\alpha|+k-1}\,f_\xi(x)^2-\left(\sum_{x\in V}\frac{\alpha_x+\xi_x}{|\alpha|+k-1}\,f(\xi+\delta_x)\right)^2 \\
		\nonumber
		&=\sum_{x\in V}\frac{\alpha_x+\xi_x}{|\alpha|+k-1}\,f_\xi(x)^2 \ .
	\end{align}
	Thus, plugging this identity into \eqref{eq:Diri-decomposition} yields
	\begin{align}\nonumber
		\cE_{\alpha,k}(f) & \ge \frac{Z_{\alpha,k-1}}{Z_{\alpha,k}} \sum_{\xi\in\Xi_{k-1}} \sum_{x\in V}\mu_{\alpha,k-1}(\xi)\, \gap_{\rm RW}(\alpha+\xi) \left(\alpha_x +\xi_x\right) f_\xi(x)^2 \\
		\nonumber
		& \ge \left(\inf_{\xi\in\Xi_{k-1}}\gap_{\rm RW}(\alpha+\xi)\right) \frac{Z_{\alpha,k-1}}{Z_{\alpha,k}} \sum_{x\in V} \sum_{\xi \in \Xi_{k-1}} \mu_{\alpha,k-1}(\xi)\left(\alpha_x+\xi_x\right)f(\xi+\delta_x)^2 \ .
	\end{align}
	This last expression outside parenthesis can be rewritten as
	\begin{align}\nonumber
		& \frac{Z_{\alpha,k-1}}{Z_{\alpha,k}} \sum_{x\in V} \sum_{\xi \in \Xi_{k-1}} \mu_{\alpha,k-1}(\xi)\left(\alpha_x+\xi_x\right)f(\xi+\delta_x)^2
		\\
		\nonumber
		&=  \sum_{x\in V} \sum_{\xi \in \Xi_{k-1}} \mu_{\alpha,k}\left(\xi+\delta_x\right)\left(\xi_x+1\right)f(\xi+\delta_x)^2 \\
		\nonumber
		& =  \sum_{x\in V} \sum_{\eta \in \Xi_k:\,\eta_x\ge1} \mu_{\alpha,k}(\eta)\,\eta_x\, f(\eta)^2  =  k \sum_{\eta \in \Xi_k} \mu_{\alpha,k}(\eta)\, f(\eta)^2 =  k\, {\rm Var}_{\alpha,k}(f) \ ,
	\end{align}
	where the first identity holds by \eqref{eq:inv-msr-transform}, the second one holds by substituting $\eta:=\xi+\delta_x$, the third one holds by exchanging the order of summations and $\eta\in \Xi_k$, and the fourth one holds by \eqref{eq:var-ker}. Therefore, we conclude the proof of the lemma.
\end{proof}

\subsection{Min-max theorem for eigenvalues}
Here, we apply the well-known min-max theorem for eigenvalues (e.g., \cite[Theorem 1.2.10]{saloff1997lectures}) to obtain a lower bound for the term $\inf_{\xi\in\Xi_{k-1}}\gap_{\rm RW}(\alpha+\xi)$ that appears in Lemma \ref{lem:Dirichlet-form-decomposition}.

\begin{lemma}\label{lem:min-max}
	For every $\alpha=(\alpha_x)_{x\in V}$ and $k \in \N$, it holds that
	\begin{equation}\nonumber
		\inf_{\xi\in\Xi_{k-1}}\gap_{\rm RW}(\alpha+\xi) \ge \frac{\alpha_{\rm min}}{\alpha_{\rm min}+k-1}\,\gap_{\rm RW}(\alpha)\ . 
	\end{equation}
\end{lemma}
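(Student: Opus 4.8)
The plan is to compare $\gap_{\rm RW}(\alpha+\xi)$ with $\gap_{\rm RW}(\alpha)$ directly through the variational formula \eqref{eq:gap-var-formula-RW}, fixing an arbitrary $\xi\in\Xi_{k-1}$ and an arbitrary non-constant test function $\phi\in\R^V$. For the Dirichlet forms, note from the second expression in \eqref{eq:dir-form-RW} that
\begin{equation}\nonumber
\cD_{\alpha+\xi}(\phi)=\frac{1}{|\alpha|+k-1}\sum_{x,y\in V}(\alpha_x+\xi_x)(\alpha_y+\xi_y)\,c_{xy}\,\phi(x)\bigl(\phi(x)-\phi(y)\bigr)\ ,
\end{equation}
and after symmetrizing $x\leftrightarrow y$ this equals $\tfrac{1}{2(|\alpha|+k-1)}\sum_{x,y}(\alpha_x+\xi_x)(\alpha_y+\xi_y)c_{xy}(\phi(x)-\phi(y))^2$; similarly $\cD_\alpha(\phi)=\tfrac{1}{2|\alpha|}\sum_{x,y}\alpha_x\alpha_y c_{xy}(\phi(x)-\phi(y))^2$. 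Since $\alpha_x\le \alpha_x+\xi_x$ pointwise, the summands only increase, so $\cD_{\alpha+\xi}(\phi)\ge \tfrac{|\alpha|}{|\alpha|+k-1}\,\cD_\alpha(\phi)$. For the variances, the key is the elementary fact that for any probability vector, $\mathrm{var}(\phi)=\tfrac12\sum_{x,y}p_xp_y(\phi(x)-\phi(y))^2$; writing $p_x=(\alpha_x+\xi_x)/(|\alpha|+k-1)$ and $q_x=\alpha_x/|\alpha|$ we need an upper bound $\mathrm{var}_{\alpha+\xi}(\phi)\le C\cdot\mathrm{var}_\alpha(\phi)$ with $C=\tfrac{|\alpha|+k-1}{|\alpha|}\cdot\tfrac{1}{\alpha_{\rm min}}\cdot(\text{something})$. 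The cleanest route: bound $(\alpha_x+\xi_x)(\alpha_y+\xi_y)$ from above by a constant times $\alpha_x\alpha_y$ — but this fails when $\xi_x$ is large and $\alpha_x$ small, so a more careful argument is needed.

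Instead I would proceed via min-max / Rayleigh quotients more cleverly. Observe that the RW on $(G,\alpha+\xi)$ and the RW on $(G,\alpha)$ act on the same space $\R^V$, and write the Rayleigh quotient for $\alpha+\xi$ as $\cD_{\alpha+\xi}(\phi)/\mathrm{var}_{\alpha+\xi}(\phi)$. Using the lower bound $\cD_{\alpha+\xi}(\phi)\ge\tfrac{|\alpha|}{|\alpha|+k-1}\cD_\alpha(\phi)\ge \tfrac{|\alpha|}{|\alpha|+k-1}\gap_{\rm RW}(\alpha)\,\mathrm{var}_\alpha(\phi)$, it suffices to show
\begin{equation}\nonumber
\frac{|\alpha|}{|\alpha|+k-1}\cdot\frac{\mathrm{var}_\alpha(\phi)}{\mathrm{var}_{\alpha+\xi}(\phi)}\ge \frac{\alpha_{\rm min}}{\alpha_{\rm min}+k-1}\ ,\quad\text{i.e.}\quad \frac{\mathrm{var}_{\alpha+\xi}(\phi)}{\mathrm{var}_\alpha(\phi)}\le \frac{|\alpha|+k-1}{|\alpha|}\cdot\frac{\alpha_{\rm min}+k-1}{\alpha_{\rm min}}\ .
\end{equation}
Now $\mathrm{var}_{\alpha+\xi}(\phi)=\tfrac{1}{2(|\alpha|+k-1)^2}\sum_{x,y}(\alpha_x+\xi_x)(\alpha_y+\xi_y)(\phi(x)-\phi(y))^2$ and $\mathrm{var}_\alpha(\phi)=\tfrac{1}{2|\alpha|^2}\sum_{x,y}\alpha_x\alpha_y(\phi(x)-\phi(y))^2$. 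Since the probability measure $(\alpha_x+\xi_x)/(|\alpha|+k-1)$ has full support, I can bound each weight: $(\alpha_x+\xi_x)(\alpha_y+\xi_y)\le \tfrac{(\alpha_x+\xi_x)(\alpha_y+\xi_y)}{\alpha_x\alpha_y}\,\alpha_x\alpha_y$, and the worst multiplicative distortion of a single weight $\tfrac{\alpha_x+\xi_x}{\alpha_x}\le \tfrac{\alpha_{\rm min}+(k-1)}{\alpha_{\rm min}}$ (since $\xi_x\le k-1$ and $t\mapsto (t+m)/t$ is decreasing). However a naive product of two such bounds gives an extra square; the fix is that $\sum_x\xi_x=k-1$ is a \emph{single} budget, so only a cross term is genuinely bad — I expect one can get away with a single factor $\tfrac{\alpha_{\rm min}+k-1}{\alpha_{\rm min}}$ plus the ratio $\tfrac{|\alpha|+k-1}{|\alpha|}$ coming from the normalization, matching the claimed bound exactly.

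The main obstacle, as this sketch suggests, is controlling the variance ratio with the \emph{sharp} constant: bounding $\cD_{\alpha+\xi}$ below is immediate and lossless, but the variance can genuinely shrink under the perturbation $\alpha\mapsto\alpha+\xi$ (mass gets concentrated), and one must extract exactly the factor $\tfrac{\alpha_{\rm min}}{\alpha_{\rm min}+k-1}$ and no worse. I expect the right bookkeeping is to apply min-max by exhibiting, for the optimal $\phi$ for $\alpha+\xi$, a comparison that pairs the normalization constants $|\alpha|$ vs.\ $|\alpha|+k-1$ with the pointwise weight distortion bounded using $\xi_x\le k-1$ only at the single site achieving $\alpha_{\rm min}$; the concavity/monotonicity of $t\mapsto t/(t+k-1)$ then delivers the stated inequality after one clears denominators. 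The remark in Section \ref{sec:open-problems} (the segment $\{1,2,3,4\}$ example) confirms this factor cannot be improved in general, so the argument must be tight, not merely order-correct.
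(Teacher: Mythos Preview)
Your Dirichlet-form comparison $\cD_{\alpha+\xi}(\phi)\ge \tfrac{|\alpha|}{|\alpha|+k-1}\,\cD_\alpha(\phi)$ is exactly what the paper does, and it is indeed lossless. The problem lies entirely in the second half.

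First, there is an algebra slip: from $\tfrac{|\alpha|}{|\alpha|+k-1}\cdot\tfrac{\mathrm{var}_\alpha(\phi)}{\mathrm{var}_{\alpha+\xi}(\phi)}\ge \tfrac{\alpha_{\rm min}}{\alpha_{\rm min}+k-1}$ one obtains, after inverting, $\tfrac{\mathrm{var}_{\alpha+\xi}(\phi)}{\mathrm{var}_\alpha(\phi)}\le \tfrac{|\alpha|}{|\alpha|+k-1}\cdot\tfrac{\alpha_{\rm min}+k-1}{\alpha_{\rm min}}$, \emph{not} $\tfrac{|\alpha|+k-1}{|\alpha|}\cdot\tfrac{\alpha_{\rm min}+k-1}{\alpha_{\rm min}}$ as you wrote. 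So the target you set yourself is too generous, and even if you established it you would not recover the lemma.

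Second, and more substantively, your plan to bound the variance ratio through the double-sum representation $\mathrm{var}_p(\phi)=\tfrac12\sum_{x,y}p_xp_y(\phi(x)-\phi(y))^2$ cannot give the sharp constant: bounding each factor $(\alpha_z+\xi_z)/\alpha_z$ by $(\alpha_{\rm min}+k-1)/\alpha_{\rm min}$ inevitably produces a square, and your appeal to the ``single budget'' $\sum_x\xi_x=k-1$ does not rescue this --- the worst case $\xi=(k-1)\delta_z$ at a site with $\alpha_z=\alpha_{\rm min}$ is a legitimate configuration and really does saturate the pointwise distortion. The handwaving in the final paragraph does not amount to an argument.

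The paper avoids this trap by comparing $L^2$-\emph{norms} rather than variances: one shows pointwise that $\tfrac{\alpha_{\rm min}}{\alpha_{\rm min}+k-1}\cdot\tfrac{\alpha_x+\xi_x}{|\alpha|}\le \tfrac{\alpha_x}{\alpha_x+k-1}\cdot\tfrac{\alpha_x+\xi_x}{|\alpha|}\le \tfrac{\alpha_x}{|\alpha|}$ (using $\xi_x\le k-1$), which gives $\|\phi\|^2_{L^2(\alpha+\xi)}\le \tfrac{|\alpha|(\alpha_{\rm min}+k-1)}{\alpha_{\rm min}(|\alpha|+k-1)}\,\|\phi\|^2_{L^2(\alpha)}$ with a \emph{single} distortion factor, since the $L^2$-norm is a single sum. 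Then the Courant--Fischer min-max theorem (applied with Dirichlet forms and $L^2$-norms, not variances) yields $\lambda_j^{\alpha+\xi}\ge \tfrac{\alpha_{\rm min}}{\alpha_{\rm min}+k-1}\,\lambda_j^\alpha$ for every $j$. Equivalently, if you insist on working with variances, note that both sides of the desired variance inequality are shift-invariant, so you may center $\phi$ under $\alpha$; then $\mathrm{var}_\alpha(\phi)=\|\phi\|^2_{L^2(\alpha)}$ while $\mathrm{var}_{\alpha+\xi}(\phi)\le\|\phi\|^2_{L^2(\alpha+\xi)}$, and the $L^2$-norm comparison finishes the job. This centering step is the missing idea in your sketch.
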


\begin{proof}
	Let us compare the Dirichlet forms and the $L^2$-norms associated to ${\rm RW}(\alpha)$ and ${\rm RW}(\alpha+\xi)$. We claim that, for all $\phi:V\to \R$ and $\xi \in \Xi_{k-1}$, 
	\begin{equation}\label{eq:comparisons}
		\frac{|\alpha|}{|\alpha|+k-1}\,\cD_{\alpha}(\phi)\le \cD_{\alpha+\xi}(\phi)\ , \qquad \frac{\alpha_{\rm min}(|\alpha|+k-1)}{|\alpha|(\alpha_{\rm min}+k-1)}\,\left\|\phi\right\|_{L^2(\alpha+\xi)}^2\le \left\|\phi\right\|_{L^2(\alpha)}^2\fstop
	\end{equation}
	The first inequality of \eqref{eq:comparisons} is trivial, since (recall $\xi\in \Xi_{k-1}$)
	\begin{align}\nonumber
		\frac{|\alpha|}{|\alpha|+k-1}\, \cD_{\alpha}(\phi) &=\frac1{2(|\alpha|+k-1)}\sum_{x,y\in V}c_{xy}\, \alpha_x \alpha_y \left(\phi(x)-\phi(y)\right)^2 \\
		\nonumber
		& \le \frac12\sum_{x,y\in V}c_{xy}\, \frac{\alpha_x+\xi_x}{|\alpha|+k-1} \left(\alpha_y+\xi_y\right) \left(\phi(x)-\phi(y)\right)^2 =  \cD_{\alpha+\xi}(\phi)\ .
	\end{align}
	The second inequality of \eqref{eq:comparisons} is also immediate by observing that
	\begin{align}\nonumber
		\frac{\alpha_{\rm min}(|\alpha|+k-1)}{|\alpha|(\alpha_{\rm min}+k-1)}\sum_{x\in V} \frac{\alpha_x+\xi_x}{|\alpha|+k-1} \,\phi(x)^2
		& \le \frac{1}{|\alpha|}\sum_{x\in V}\alpha_x \frac{\alpha_x+\xi_x}{\alpha_x+k-1} \,\phi(x)^2
		\\
		\nonumber
		&\le  \sum_{x\in V}\frac{\alpha_x}{|\alpha|} \,\phi(x)^2 \ ,
	\end{align}
	where for the first and second inequalities we used, respectively,
	\begin{equation}\nonumber
		\frac{\alpha_{\rm min}}{\alpha_{\rm min}+k-1}\le\frac{\alpha_x}{\alpha_x+k-1}\qquad\text{and}\qquad \xi_x\le k-1\ ,\qquad x \in V\ .
	\end{equation}  
	
	By applying as, e.g., in \cite[Theorem 1.2.11]{saloff1997lectures} the min-max theorem for eigenvalues and the comparison inequalities in \eqref{eq:comparisons}, we get
	\begin{equation}\nonumber
		\frac{\alpha_{\rm min}}{\alpha_{\rm min}+k-1}\,\lambda_j^\alpha\le \lambda^{\alpha+\xi}_j\comma\qquad j=0,1,\ldots, n-1\ ,
	\end{equation}
	where $0=\lambda_0^\alpha < \lambda_1^\alpha\le\cdots\le\lambda_{n-1}^\alpha$ are the eigenvalues of the generator $-A_\alpha$ and $0=\lambda_0^{\alpha+\xi} < \lambda_1^{\alpha+\xi}\le\cdots\le\lambda_{n-1}^{\alpha+\xi}$ are the eigenvalues of the generator $-A_{\alpha+\xi}$.
	In particular, for $j=1$, we obtain the desired comparison inequality for the spectral gaps, which concludes the proof of the lemma.
\end{proof}

\subsection{Proof of lower bound in Theorem \ref{th:spectral-gap-SIP}}
Finally, we present a formal proof of the lower bound in Theorem \ref{th:spectral-gap-SIP}.

\begin{proof}[Proof of lower bound in Theorem \ref{th:spectral-gap-SIP}]
	Recall from \eqref{eq:LB-k} that we aim to prove that
	\begin{equation}\label{eq:LB-k-proof}
		(1\wedge \alpha_{\rm min}) \gap_{\rm RW}(\alpha) \le \gap_{k} (\alpha)\ ,\qquad k \in \N\ .
	\end{equation}
	We proceed by an induction on $k\in\N$. First, \eqref{eq:LB-k-proof} is obvious for $k=1$. Next, suppose that \eqref{eq:LB-k-proof} holds for $k-1$, and we prove \eqref{eq:LB-k-proof} for $k\ge2$. By \eqref{eq:gap-var-formula-k} and  Lemma \ref{lem:eigenfunction-decomposition}, we have
	\begin{align}\label{eq:variational-gap-final}
		\begin{aligned}
			\gap_{k}(\alpha) &=\inf_{f\in\R^{\Xi_k}:\,f\ne \text{const.}}\frac{\cE_{\alpha,k}(f)}{{\rm Var}_{\alpha,k}(f)} \\
			&= \left(\inf_{f\in{\rm Im}\,\ann:\,f\ne \text{const.}}\frac{\cE_{\alpha,k}(f)}{{\rm Var}_{\alpha,k}(f)} \right)  \wedge  \left( \inf_{f\in{\rm Ker}\,\cre:\,f\ne \text{const.}}\frac{\cE_{\alpha,k}(f)}{{\rm Var}_{\alpha,k}(f)} \right)  \ . 
		\end{aligned}
	\end{align}
	Since $\ann:\R^{\Xi_{k-1}}\to\R^{\Xi_k}$ lifts all the eigenfunctions of $-L_{k-1}$ to $-L_k$, it is readily verified that
	\begin{equation}\label{eq:proof-image}
		\inf_{f\in{\rm Im}\,\ann:\,f\ne \text{const.}}\frac{\cE_{\alpha,k}(f)}{{\rm Var}_{\alpha,k}(f)} = \gap_{k-1}(\alpha) \ge \left(1\wedge \alpha_{\rm min}\right)\gap_{\rm RW}(\alpha) \ ,
	\end{equation}
	where the inequality holds by the induction hypothesis. Moreover, by Lemmas \ref{lem:Dirichlet-form-decomposition} and \ref{lem:min-max}, we have
	\begin{align}\label{eq:proof-kernel-1}
		\begin{aligned}
			\inf_{f\in{\rm Ker}\,\cre:\,f\ne \text{const.}}\frac{\cE_{\alpha,k}(f)}{{\rm Var}_{\alpha,k}(f)} &\ge k \left( \inf_{\xi\in\Xi_{k-1}}\gap_{\rm RW}(\alpha+\xi)\right)\\
			& \ge \frac{\alpha_{\rm min}\, k}{\alpha_{\rm min}+k-1}\,\gap_{\rm RW}(\alpha)\ .
		\end{aligned}
	\end{align}
	It is straightforward to check that
	\begin{equation}\label{eq:proof-kernel-2}
		\frac{\alpha_{\rm min}\,k}{\alpha_{\rm min}+k-1}\ge1\wedge \alpha_{\rm min}\ ,\qquad \text{for all}\ k\ge 2\ .
	\end{equation}
	Collecting \eqref{eq:variational-gap-final}, \eqref{eq:proof-image}, \eqref{eq:proof-kernel-1}, and \eqref{eq:proof-kernel-2}, we conclude that
	\begin{equation}\nonumber
		\gap_k(\alpha) \ge (1\wedge \alpha_{\rm min})  \gap_{\rm RW}(\alpha) \ ,
	\end{equation}
	which proves \eqref{eq:LB-k-proof} for $k$. Therefore, by induction on $k$, we conclude the proof of \eqref{eq:LB-k-proof} and thus the proof of Theorem \ref{th:spectral-gap-SIP}.
\end{proof}

\section{Proof of Theorem \ref{th:spectral-gap-BEP}}\label{sec:BEP-proof}

the  BEP and SIP are related via an intertwining relation (e.g., \cite[Proposition 5.1]{redig_factorized_2018}):  for all $k \in \N$ and $f \in \R^{\Xi_k}$, we have
\begin{equation}\label{eq:poisson-intertwining}
	\cL \varLambda f = \varLambda L_k f\ ,\qquad \text{with}\ \varLambda: \R^{\Xi_k}\to \R^{\Delta_V}\ , \ \varLambda f(\zeta):= \sum_{\eta\in \Xi_k}\left(\prod_{x\in V}\frac{\zeta_x^{\eta_x}}{\eta_x!}\right) f(\eta)\  .
\end{equation} Thanks to this connection and the assertion in Theorem \ref{th:spectral-gap-SIP}, once we check that the spectrum of $\cL$ on $L^2(\Delta_V,\pi)$ is pure point, the proof of Theorem \ref{th:spectral-gap-BEP} boils down to verifying the validity of the following two claims:
\begin{itemize}
	\item
	$-\gap_{\rm RW}(\alpha)$ belongs to the spectrum of $\cL$ (Lemma \ref{lem:lift-2});
	\item  each eigenvalue of $\cL$ is also an eigenvalue of $L_k$ given in \eqref{eq:generator-SIP}, for some $k \in \N$ (Lemma \ref{lem:final-BEP}). 
\end{itemize} 

The fact that $\cL$ has a pure point spectrum may  be shown as follows.	The generator
$\cL$ is self-adjoint on $L^2(\Delta_V,\pi)$, thus, its spectrum is real.   Moreover,  for every $k\in \N$, the generator $\cL$ (see \eqref{eq:generator-BEP}) is easily seen to leave invariant the  subspace of all polynomials of degree at most $k$ in the variables $(\zeta_x)_{x\in V}$. Each of these subspaces is finite-dimensional, ensuring a decomposition of $\cL$ in terms of finitely-many eigenvalue/eigenfunction pairs  when restricted therein. 	 By density of polynomials in $L^2(\Delta_V,\pi)$, this eigendecomposition, suitably orthonormalized, gives rise to an orthonormal basis of $L^2(\Delta_V,\pi)$ consisting of eigenfunctions of $\cL$.	In conclusion, the generator $\cL$ on $L^2(\Delta_V,\pi)$ admits a pure point real spectrum. 

We verify that $-\gap_{\rm RW}(\alpha)$ belongs to the spectrum of $\cL$ in the following lemma, whose proof is analogous to that of Lemma \ref{lem:lift-1}.
\begin{lemma}\label{lem:lift-2}
	Let $\psi:V\to \R$ be an eigenfunction for $A_\alpha$ associated to $-\gap_{\rm RW}(\alpha)$. Then, the first-order  polynomial $f:\Delta_V\to \R$ in the variables $(\zeta_x)_{x\in V}$  given by 
	\begin{equation}\label{eq:f_psi}
		f_\psi(\zeta):= \sum_{x\in V}\psi(x)\, \zeta_x\ ,\qquad \zeta \in \Delta_V\ ,
	\end{equation}
	is an eigenfunction for $\cL$ associated to the same eigenvalue.	
\end{lemma}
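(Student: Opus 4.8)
The plan is to mirror the proof of Lemma~\ref{lem:lift-1}: lift $\psi$ to a SIP eigenfunction and then transport it to the diffusion side through the intertwining \eqref{eq:poisson-intertwining}. Fix any $k\in\N$ (e.g.\ $k=1$) and let $\psi$ be the given eigenfunction of $A_\alpha$, so $A_\alpha\psi=-\gap_{\rm RW}(\alpha)\psi$. By Lemma~\ref{lem:lift-1}, the function $f_{\psi,k}\in\R^{\Xi_k}$, $f_{\psi,k}(\eta)=\sum_{x\in V}\psi(x)\,\eta_x$, satisfies $L_k f_{\psi,k}=-\gap_{\rm RW}(\alpha)\,f_{\psi,k}$.

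The first step is to compute $\varLambda f_{\psi,k}$ explicitly. Using the definition of $\varLambda$ in \eqref{eq:poisson-intertwining}, the identity $\eta_y\,\zeta_y^{\eta_y}/\eta_y!=\zeta_y\,\zeta_y^{\eta_y-1}/(\eta_y-1)!$ for $\eta_y\ge 1$, the reindexing $\eta\mapsto\eta-\delta_y\in\Xi_{k-1}$, and the multinomial identity $\sum_{\zeta\in\Xi_{k-1}}\prod_{x\in V}\zeta_x^{\zeta_x}/\zeta_x! = (\sum_{x\in V}\zeta_x)^{k-1}/(k-1)! = 1/(k-1)!$ valid on $\Delta_V$, one gets $\sum_{\eta\in\Xi_k}\big(\prod_{x\in V}\zeta_x^{\eta_x}/\eta_x!\big)\,\eta_y = \zeta_y/(k-1)!$ for each $y\in V$. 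Hence $\varLambda f_{\psi,k} = \tfrac{1}{(k-1)!}\,f_\psi$, a nonzero scalar multiple of $f_\psi$; it is genuinely non-constant on $\Delta_V$ because $\psi$, being an eigenfunction for the nonzero eigenvalue $-\gap_{\rm RW}(\alpha)$, is non-constant (evaluate $f_\psi$ at the vertices $\delta_x$).

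The second step is immediate: by \eqref{eq:poisson-intertwining}, $\cL\varLambda f_{\psi,k}=\varLambda L_k f_{\psi,k}=-\gap_{\rm RW}(\alpha)\,\varLambda f_{\psi,k}$, and dividing through by $1/(k-1)!$ yields $\cL f_\psi=-\gap_{\rm RW}(\alpha)\,f_\psi$, which is the assertion. Alternatively, one can bypass $\varLambda$ with a direct verification: the second-order part of $\cL$ in \eqref{eq:generator-BEP} annihilates every affine function of $(\zeta_x)_{x\in V}$ since $(\partial_{\zeta_x}-\partial_{\zeta_y})^2 f_\psi=0$, while for the first-order part $(\partial_{\zeta_x}-\partial_{\zeta_y})f_\psi=\psi(x)-\psi(y)$, and symmetrizing the double sum via $c_{xy}=c_{yx}$ collapses it to $\cL f_\psi(\zeta)=\sum_{x\in V}\zeta_x\,(A_\alpha\psi)(x)=-\gap_{\rm RW}(\alpha)\,f_\psi(\zeta)$.

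I do not expect a genuine obstacle here: both routes are short and elementary. The only points needing a little care are the combinatorial bookkeeping in evaluating $\varLambda f_{\psi,k}$ (the shift $\eta\mapsto\eta-\delta_y$ and the use of $\sum_{x\in V}\zeta_x=1$ on $\Delta_V$) — or, in the direct route, the sign and the symmetrization step — together with the observation that $f_\psi$ is non-constant, so that it is a bona fide eigenfunction and $-\gap_{\rm RW}(\alpha)$ indeed lies in the spectrum of $\cL$.
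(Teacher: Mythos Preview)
Your proposal is correct and follows essentially the same intertwining route as the paper; the paper simply specializes to $k=1$, so that with $g(\delta_x):=\psi(x)$ one has $\varLambda g=f_\psi$ directly and the factor $1/(k-1)!$ never appears. (One notational slip: in your multinomial identity the summation variable should be $\xi\in\Xi_{k-1}$, not $\zeta$, but the meaning is clear.)
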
 
\begin{proof}
	Let $g\in \R^{\Xi_1}$ be given by $g(\delta_x):= \psi(x)$, $x\in V$. Then, recalling \eqref{eq:poisson-intertwining},  the function in \eqref{eq:f_psi} reads as
	\begin{equation}\nonumber
		f_\psi= \varLambda\, g\ .
	\end{equation} In view of this representation for $f_\psi$, the injectivity of $\varLambda$, and the intertwining relation \eqref{eq:poisson-intertwining}, the desired claim follows as in the proof of Lemma \ref{lem:lift-1}. 
\end{proof}

In the following lemma, we show that each eigenfunction $f$ of $\cL$ corresponds to an eigenfunction of $L_k$, for some $k\in \N$, both associated to the same eigenvalue.
\begin{lemma}\label{lem:final-BEP}
	Let $f\in L^2(\Delta_V,\pi)$ be a non-constant eigenfunction of $\cL$ associated to the eigenvalue $-\lambda < 0$. Then, there exist $k\in \N$ and $g \in \R^{\Xi_k}$ such that $g$ is an eigenfunction of $L_k$ associated to the eigenvalue $-\lambda< 0$.
\end{lemma}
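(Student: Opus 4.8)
The plan is to use the intertwining relation \eqref{eq:poisson-intertwining} in the reverse direction: given an eigenfunction of $\cL$, find a finite-particle eigenfunction of some $L_k$ with the same eigenvalue. The starting point is the observation, already recorded in the discussion of the pure-point spectrum, that $\cL$ preserves the filtration of $L^2(\Delta_V,\pi)$ by polynomial degree. Write $\cP_{\le k}$ for the (finite-dimensional) space of polynomials in $(\zeta_x)_{x\in V}$ of degree at most $k$ restricted to $\Delta_V$, and $\cP_k := \cP_{\le k} \ominus \cP_{\le k-1}$ for the orthogonal complement inside $L^2(\Delta_V,\pi)$. Since $\cL$ leaves each $\cP_{\le k}$ invariant and is self-adjoint, it also leaves each $\cP_k$ invariant; hence every eigenfunction $f$ of $\cL$ may be taken to lie in some $\cP_k$. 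First I would fix such a $k$ with $f\in\cP_k\setminus\{0\}$; because $f$ is non-constant we have $k\ge 1$.

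Next I would relate $\cP_k$ to the image of the operator $\varLambda\colon\R^{\Xi_k}\to\R^{\Delta_V}$ from \eqref{eq:poisson-intertwining}. By inspection, $\varLambda$ maps $\R^{\Xi_k}$ into $\cP_{\le k}$ (each monomial $\prod_x \zeta_x^{\eta_x}/\eta_x!$ with $|\eta|=k$ has degree exactly $k$, and $\varLambda 1_{\Xi_k}$ hits the top-degree part), and in fact $\varLambda$ is injective with $\dim\R^{\Xi_k}=|\Xi_k|=\dim\cP_{\le k}$, so $\varLambda$ is a linear isomorphism onto $\cP_{\le k}$. Therefore there is a unique $g\in\R^{\Xi_k}$ with $\varLambda g = f$; moreover $g$ is non-constant, since $\varLambda$ maps constants to constants (or, more precisely, $\varLambda$ maps $\R^{\Xi_k}$-constants to degree-$k$ polynomials and $f\in\cP_k$ is orthogonal to lower-degree pieces — in any case $g\neq 0$ is immediate from injectivity, and if $g$ were constant then $f=\varLambda g$ would be a fixed multiple of $\varLambda 1$, which is itself $\cL$-invariant only for the eigenvalue $0$, contradicting $-\lambda<0$). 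Applying the intertwining \eqref{eq:poisson-intertwining},
\begin{equation}\nonumber
	\varLambda L_k g = \cL \varLambda g = \cL f = -\lambda f = -\lambda \varLambda g = \varLambda(-\lambda g)\ ,
\end{equation}
and injectivity of $\varLambda$ yields $L_k g = -\lambda g$, so $g$ is the desired eigenfunction of $L_k$ with eigenvalue $-\lambda<0$.

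The main point requiring care — and what I expect to be the only genuine obstacle — is the claim that $\varLambda$ is a bijection from $\R^{\Xi_k}$ onto $\cP_{\le k}$, and the bookkeeping that lets one pass from "$f$ is an eigenfunction of $\cL$" to "$f\in\cP_k$ for a definite $k$". Injectivity of $\varLambda$ is asserted earlier in the excerpt; surjectivity onto $\cP_{\le k}$ then follows from the dimension count $|\Xi_k|=\binom{n+k-1}{k}=\dim\cP_{\le k}$, provided one checks $\varLambda(\R^{\Xi_k})\subseteq\cP_{\le k}$, which is immediate from the defining formula. The reduction to a single degree uses only that $\cL$ respects the polynomial filtration together with self-adjointness (to split $\cP_{\le k}=\bigoplus_{j\le k}\cP_j$ into $\cL$-invariant pieces), both of which are established in the paragraph preceding Lemma \ref{lem:lift-2}. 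Everything else is routine linear algebra, so the proof is short once these structural facts are in place.
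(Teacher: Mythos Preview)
Your proof is correct and follows the same core strategy as the paper --- pull the eigenfunction $f$ back through the intertwining operator $\varLambda$ to obtain an eigenfunction of some $L_k$ --- but your packaging differs in a way that is actually more careful. The paper writes $f=\sum_{\ell=0}^k \varLambda g_\ell$ as a sum over \emph{all} degrees $0\le \ell\le k$, applies the intertwining termwise, and then asserts that the resulting identity
\[
\sum_{\ell=0}^k\sum_{\eta\in\Xi_\ell}\{L_\ell g_\ell(\eta)+\lambda g_\ell(\eta)\}\prod_{x\in V}\frac{\zeta_x^{\eta_x}}{\eta_x!}=0\quad\text{on }\Delta_V
\]
forces each bracket to vanish. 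You instead use the dimension count $|\Xi_k|=\binom{n+k-1}{k}=\dim\cP_{\le k}$ to recognize $\varLambda\colon\R^{\Xi_k}\to\cP_{\le k}$ as a linear isomorphism, write $f=\varLambda g$ for a \emph{single} $g\in\R^{\Xi_k}$, and conclude $L_k g=-\lambda g$ directly from injectivity. This sidesteps the delicate point that monomials of different total degrees are \emph{not} linearly independent on $\Delta_V$ (the constraint $\sum_x\zeta_x=1$ creates relations), which the paper's formulation glosses over; by working at a single homogeneous degree, where injectivity of $\varLambda$ is unambiguous, your argument is cleaner. As a minor remark, your preliminary reduction to $f\in\cP_k$ via the orthogonal splitting is not strictly needed: once $\varLambda$ is a bijection onto $\cP_{\le k}$ and $f\in\cP_{\le k}$ is any nonzero eigenfunction, the rest goes through, and $g\neq 0$ together with $-\lambda<0$ already rules out $g$ constant.
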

\begin{proof}
	By the discussion at the beginning of this section,  all eigenfunctions of $\cL$ are polynomials in the variables $(\zeta_x)_{x\in V}$ of degree at most $k$, for some $k\in \N$. Hence, we may rewrite $f$ as follows:
	\begin{equation}\label{eq:f-lemma42}
		f(\zeta)=\sum_{\ell=0}^k \sum_{\eta \in \Xi_\ell} g_\ell(\eta)\, \prod_{x\in V}\frac{\zeta_x^{\eta_x}}{\eta_x!} = \sum_{\ell=0}^k \varLambda g_\ell(\zeta)
		\ ,
	\end{equation}
	for some $k\in \N$ and functions $g_\ell \in \R^{\Xi_\ell}$, $\ell=0,1,\ldots, k$. In the formula above, we included the factor $\left(\prod_{x\in V}\eta_x!\right)^{-1}$ just for convenience, so to recover an expression like that in \eqref{eq:poisson-intertwining}. Now, by applying the generator $\cL$ to the function $f$ in \eqref{eq:f-lemma42} and using  \eqref{eq:poisson-intertwining}, we get 
	\begin{equation}\nonumber
		\cL f(\zeta)= \sum_{\ell=0}^k \sum_{\eta \in \Xi_\ell}L_\ell g_\ell(\eta)\, \prod_{x\in V}\frac{\zeta_x^{\eta_x}}{\eta_x!} = \sum_{\ell=0}^k \varLambda L_\ell g_\ell(\zeta)\ .
	\end{equation}
	The assumption  $\cL f+\lambda f=0$ yields
	\begin{equation}\nonumber
		\sum_{\ell=0}^k \sum_{\eta \in \Xi_\ell} \left\{ L_\ell g_\ell(\eta)+\lambda g_\ell(\eta)\right\}\prod_{x\in V}\frac{\zeta_x^{\eta_x}}{\eta_x!}=0\ ,
	\end{equation}
	which holds for all $\zeta \in \Delta_V$ if and only if each term in the bracket equals zero. Finally, since at least one among the functions $g_\ell$, $\ell>0$, is non-zero, this ensures that $-\lambda<0$ is an eigenvalue for $L_\ell$.
\end{proof}

\begin{appendix}
	\section{Lookdown representation for ${\rm SIP}$}\label{sec:appendix}

	In this appendix, we provide a self-contained proof of the injectivity of the operator $\ann:\R^{\Xi_{k-1}} \to \R^{\Xi_k}$ \eqref{eq:ann-operator} and the intertwining relation \eqref{eq:intertwining-ann}. Relations of this kind play a major role in the context of interacting particle systems and population genetics (e.g., \cite{liggett_interacting_2005-1,giardina_duality_2009,etheridge_kurtz_genealogical_2019,carinci_consistent_2021,etheridge_looking_2024} and references therein). 
	For the particular case of  ${\rm SIP}$, we know at least three proofs of the identity in \eqref{eq:intertwining-ann}. The first one is by a direct (but lengthy) computation; the second one is by Lie algebraic representations of the generator $L_k$ (see \eqref{eq:generator-SIP}) in terms of products of \textquotedblleft single-site\textquotedblright\ analogues of $\ann$ and $\cre$ (e.g., \cite{giardina_duality_2009}); the third one is by a lookdown representation of ${\rm SIP}$, in the spirit of the seminal work \cite{donnelly_kurtz_countable_1996}. For this appendix, we follow this third approach, as it seems the least known, yet, it provides a probabilistic insight into the identity in \eqref{eq:ann-operator} and several related properties of the particle system.
	
	In a nutshell, the main idea is to describe ${\rm SIP}$  as arising  from a non-symmetric hierarchical version of it, for which the corresponding intertwining relation is then straightforward. A hierarchical structure is recovered by first assigning labels to particles (which do not change all throughout the dynamics), and then devising  jumping rates which depend also on labels. 
	Following a standard terminology, we imagine labeled particles being assigned lanes, stacked one on top of the other, so that the particle with the smallest label represents the bottom particle, whereas the highest-label  particle is the top particle. In this language, we derive ${\rm SIP}$ from a model in which high particles \textquotedblleft look down\textquotedblright\ to low particles, but not \textit{vice versa}.
	
	Let us now introduce some general notation to describe the position and evolution of these labeled particles, and fix an integer $k\ge 2$, representing the total number of labeled particles,  all throughout this appendix. For all $\mathbf x=(x_1,\ldots, x_k)\in V^k$ and permutations $\sigma \in S_k$, let
	$\sigma \mathbf x:=(x_{\sigma(1)},\ldots, x_{\sigma(k)})\in V^k$. Further, let $\cS_k:\R^{V^k}\to \R^{V^k}$ denote the symmetrization operator: for all $\varphi \in \R^{V^k}$ and $\mathbf x \in V^k$,
	\begin{equation}\label{eq:symmetrization}
		\cS_k \varphi(\mathbf x):=\frac1{k!}\sum_{\sigma \in S_k} \varphi(\sigma \mathbf x)\ .
	\end{equation}
	Finally, the function $\Phi_k: V^k\to \Xi_k$, defined as 
	\begin{equation}\label{eq:Phi-k}
		\Phi_k(\mathbf x):= \sum_{i=1}^k \delta_{x_i}\ ,
	\end{equation}
	removes the labels from particles.
	
	\subsection{Annihilation operators} Next to the annihilation operator $\ann:\R^{\Xi_{k-1}}\to \R^{\Xi_k}$ defined in \eqref{eq:ann-operator} and well-suited for unlabeled particle configurations, 
	we introduce a \emph{top-particle annihilation operator} $\cJ_k: \R^{V^{k-1}}\to \R^{V^k}$, given, for all $\varphi \in \R^{V^{k-1}}$, by
	\begin{equation}\label{eq:Jk}
		\cJ_k\varphi(\mathbf x):= \varphi(x_1,\ldots, x_{k-1})\ ,\qquad \mathbf x=(x_1,\ldots, x_k)\in V^k\ .
	\end{equation}
	Up to permuting the labels uniformly at random,  $\mathfrak a_k$ is just the unlabeled version of this operator (up to a constant factor). This is the content of the following lemma.
	\begin{lemma}\label{lem:ann-representation}
		For all $g\in \R^{\Xi_{k-1}}$,  we have
		$
		\ann g \circ \Phi_k = k\, \cS_k \cJ_k (g\circ \Phi_{k-1})$.
	\end{lemma}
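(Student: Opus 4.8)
The plan is to unwind both sides of the claimed identity $\ann g \circ \Phi_k = k\, \cS_k \cJ_k (g\circ \Phi_{k-1})$ at an arbitrary point $\mathbf x = (x_1,\ldots,x_k)\in V^k$ and check equality by a bookkeeping argument. First I would compute the left-hand side: by definition of $\Phi_k$ in \eqref{eq:Phi-k}, $\Phi_k(\mathbf x) = \sum_{i=1}^k \delta_{x_i} =: \eta \in \Xi_k$, and then by the definition \eqref{eq:ann-operator} of the annihilation operator,
\begin{equation}\nonumber
	\ann g(\eta) = \sum_{z\in V}\eta_z\, g(\eta - \delta_z) = \sum_{z\in V}\eta_z\, g\!\left(\Phi_k(\mathbf x) - \delta_z\right)\ .
\end{equation}
The key observation is that each site $z$ with $\eta_z \ge 1$ arises as $z = x_i$ for exactly $\eta_z$ distinct indices $i\in\{1,\ldots,k\}$, and for each such $i$ one has $\Phi_k(\mathbf x) - \delta_{x_i} = \sum_{j\ne i}\delta_{x_j}$. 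Hence the weighted sum over sites $z$ can be rewritten as an unweighted sum over the $k$ labels:
\begin{equation}\nonumber
	\ann g\circ\Phi_k(\mathbf x) = \sum_{i=1}^k g\!\left(\textstyle\sum_{j\ne i}\delta_{x_j}\right)\ .
\end{equation}

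Next I would compute the right-hand side. By \eqref{eq:symmetrization} and \eqref{eq:Jk},
\begin{equation}\nonumber
	k\,\cS_k\cJ_k(g\circ\Phi_{k-1})(\mathbf x) = \frac{k}{k!}\sum_{\sigma\in S_k} \cJ_k(g\circ\Phi_{k-1})(\sigma\mathbf x) = \frac{1}{(k-1)!}\sum_{\sigma\in S_k} g\!\left(\Phi_{k-1}(x_{\sigma(1)},\ldots,x_{\sigma(k-1)})\right)\ ,
\end{equation}
where I used $\cJ_k\varphi(\sigma\mathbf x) = \varphi(x_{\sigma(1)},\ldots,x_{\sigma(k-1)})$ together with $\Phi_{k-1}(x_{\sigma(1)},\ldots,x_{\sigma(k-1)}) = \sum_{m=1}^{k-1}\delta_{x_{\sigma(m)}}$. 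Now I group the permutations $\sigma\in S_k$ according to the value $\sigma(k) = i$: for each fixed $i\in\{1,\ldots,k\}$ there are exactly $(k-1)!$ such permutations, and for all of them the multiset $\{x_{\sigma(1)},\ldots,x_{\sigma(k-1)}\}$ equals $\{x_j : j\ne i\}$, so $\Phi_{k-1}(x_{\sigma(1)},\ldots,x_{\sigma(k-1)}) = \sum_{j\ne i}\delta_{x_j}$ regardless of the internal ordering. Therefore
\begin{equation}\nonumber
	k\,\cS_k\cJ_k(g\circ\Phi_{k-1})(\mathbf x) = \frac{1}{(k-1)!}\sum_{i=1}^k (k-1)!\, g\!\left(\textstyle\sum_{j\ne i}\delta_{x_j}\right) = \sum_{i=1}^k g\!\left(\textstyle\sum_{j\ne i}\delta_{x_j}\right)\ ,
\end{equation}
which matches the left-hand side computed above. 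Since $\mathbf x\in V^k$ was arbitrary, this establishes the lemma.

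There is no serious obstacle here — the statement is essentially a combinatorial identity and the only point requiring a little care is the correct tracking of multiplicities in two places: on the left, the fact that the site-weight $\eta_z$ counts exactly the number of labels $i$ with $x_i = z$ (so the $\eta_z$ factor is absorbed), and on the right, the fact that the number of permutations with $\sigma(k)=i$ is $(k-1)!$, which cancels against the $\frac{1}{(k-1)!}$ prefactor. One should also note explicitly that the value $g(\Phi_{k-1}(\cdots))$ depends only on the multiset of the first $k-1$ coordinates and not on their order, since $\Phi_{k-1}$ discards the ordering; this is what makes the grouping by $\sigma(k)$ legitimate. The $k=1$ (or empty-configuration) edge case is handled by the convention that $\R^{\emptyset}$ is the space of constants, so both sides reduce to that constant.
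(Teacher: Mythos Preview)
Your proof is correct and follows essentially the same route as the paper: both arguments reduce the left-hand side to $\sum_{i=1}^k g\big(\sum_{j\ne i}\delta_{x_j}\big)$ and then match it with the right-hand side by grouping permutations according to $\sigma(k)=i$ and using that $\Phi_{k-1}$ forgets the ordering. Your version is arguably a bit more explicit about why the site-weight $\eta_z$ converts the sum over sites into a sum over labels, but otherwise the two proofs coincide.
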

	\begin{proof}
		For every $\mathbf x=(x_1,\ldots, x_k)\in V^k$ and $\varphi:= g\circ \Phi_{k-1}\in \R^{V^{k-1}}$,	we get
		\begin{align*}
			\frac1k	\, \ann g(\Phi_k(\mathbf x))&= \frac1k\sum_{i=1}^k g(\Phi_k(\mathbf x)-\delta_{x_i})\\
			&= \frac1k \sum_{i=1}^k \frac1{(k-1)!}\sum_{\sigma \in S_k\,:\, \sigma(k)=i} \varphi(x_{\sigma(1)},\ldots, x_{\sigma(k-1)})\\
			&=  \frac1{k!}\sum_{\sigma \in S_k} \cJ_k \varphi(\sigma \mathbf x)\\
			&= \cS_k\cJ_k \varphi(\mathbf x)\ ,
		\end{align*}
		where we used the definition of $\ann$ in \eqref{eq:ann-operator}, that of $\varphi$ and $\cS_{k-1}$, and finally that of $\cJ_k$ and $\cS_k$.
		This concludes the proof of the lemma.
	\end{proof} 
	
	The next lemma establishes the injectivity of both annihilation operators, a property that we used, e.g., for the proof of the upper bound in Theorem \ref{th:spectral-gap-SIP}.

	\begin{lemma}[Injectivity]\label{lemma:injectivity} Both $\cJ_k: \R^{V^{k-1}}\to \R^{V^k}$ and $\ann: \R^{\Xi_{k-1}}\to \R^{\Xi_k}$ are injective.
	\end{lemma}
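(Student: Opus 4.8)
The plan is to prove injectivity of the two annihilation operators separately, starting with the labeled version $\cJ_k$, since it is essentially trivial, and then transferring the result to $\ann$ via the representation in Lemma~\ref{lem:ann-representation}.

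\textbf{Injectivity of $\cJ_k$.}
First I would observe that $\cJ_k$ has an obvious left inverse. Indeed, for $\varphi \in \R^{V^{k-1}}$, evaluating $\cJ_k\varphi$ at $\mathbf x=(x_1,\ldots,x_{k-1},x_k)$ for an arbitrary choice of last coordinate $x_k$ (say $x_k = x_{k-1}$, or any fixed vertex) recovers $\varphi(x_1,\ldots,x_{k-1})$ exactly. So if $\cJ_k\varphi = 0$ then $\varphi(x_1,\ldots,x_{k-1}) = \cJ_k\varphi(x_1,\ldots,x_{k-1},x_{k-1}) = 0$ for all $(x_1,\ldots,x_{k-1})\in V^{k-1}$, whence $\varphi\equiv 0$. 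This is immediate from the definition \eqref{eq:Jk}.

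\textbf{Injectivity of $\ann$.}
For the unlabeled operator, suppose $g\in\R^{\Xi_{k-1}}$ satisfies $\ann g = 0$. Then in particular $\ann g(\Phi_k(\mathbf x)) = 0$ for every $\mathbf x \in V^k$. By Lemma~\ref{lem:ann-representation}, this gives $k\,\cS_k\cJ_k(g\circ\Phi_{k-1}) = 0$, i.e.\ $\cS_k\cJ_k(g\circ\Phi_{k-1}) = 0$ as a function on $V^k$. Now I would exploit the fact that $\cJ_k\varphi$ is already symmetric in its first $k-1$ arguments whenever $\varphi = g\circ\Phi_{k-1}$ is symmetric (which it is, being a function of the unordered configuration $\Phi_{k-1}$). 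Writing out the symmetrization $\cS_k$ over $S_k$ and grouping the $k!$ permutations according to which index is sent to position $k$, one gets
\begin{equation}\nonumber
	\cS_k\cJ_k\varphi(\mathbf x) = \frac1k\sum_{i=1}^k \varphi\big(\Phi_k(\mathbf x)-\delta_{x_i}\big) \circ(\text{relabeling})\ ,
\end{equation}
but more cleanly: since $\cS_k\cJ_k(g\circ\Phi_{k-1}) = \frac1k\,(\ann g)\circ\Phi_k$ vanishes and $\Phi_k$ is surjective onto $\Xi_k$, we directly conclude $\ann g \equiv 0$ on $\Xi_k$ forces, for each $\eta\in\Xi_k$, the identity $\sum_{x\in V}\eta_x\,g(\eta-\delta_x) = 0$. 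The remaining task is a genuine combinatorial argument: deduce $g\equiv 0$ on $\Xi_{k-1}$ from the vanishing of all these weighted sums. The cleanest route is an induction on the support or on a lexicographic order of configurations: pick $\xi\in\Xi_{k-1}$ and choose $\eta = \xi + \delta_{x_\ast}$ where $x_\ast$ is, say, a vertex maximizing some fixed linear order; then the relation at $\eta$ expresses $g(\xi)$ (with nonzero coefficient $\eta_{x_\ast} = \xi_{x_\ast}+1$) in terms of values $g(\eta - \delta_x)$ with $x\ne x_\ast$, each of which is a $(k-1)$-configuration that is strictly larger in the chosen order; by downward induction from the top configuration these all vanish, hence $g(\xi)=0$.

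\textbf{Main obstacle.}
The genuinely non-formal step is the combinatorial deduction that $\sum_{x}\eta_x\,g(\eta-\delta_x)=0$ for all $\eta\in\Xi_k$ implies $g\equiv 0$; the coefficients $\eta_x$ are not invertible as a single linear system without care, so I would set up the right partial order on $\Xi_{k-1}$ (or alternatively note that $\ann$, composed appropriately, relates to multiplication operators whose injectivity is classical — e.g.\ via \eqref{eq:composition-of-ann} identifying $\ann$ up to normalization with a map on polynomials, where injectivity amounts to the fact that a symmetric polynomial of degree $k-1$ vanishing after one more "symmetrized multiplication" step must be zero). Given the paper's framing, though, I expect the authors to simply invoke the left-inverse for $\cJ_k$ plus Lemma~\ref{lem:ann-representation} and a short surjectivity-of-$\Phi_k$ remark, pushing any residual combinatorics into a one-line order argument as above.
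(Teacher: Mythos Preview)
Your argument is correct, and the combinatorial induction you sketch for $\ann$ works. A couple of remarks comparing it with the paper's proof.

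First, the detour through Lemma~\ref{lem:ann-representation} is unnecessary and, as you yourself discover midway, circular: it only brings you back to $\ann g\equiv 0$ on $\Xi_k$. The paper explicitly notes this, writing that injectivity of $\ann$ ``does not follow at once from the representation in Lemma~\ref{lem:ann-representation}, because $\cS_k$ is not one-to-one,'' and proceeds directly to the combinatorial argument. Your speculation in the final paragraph that the authors might push things through $\cJ_k$ and Lemma~\ref{lem:ann-representation} is therefore off the mark.

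Second, your induction and the paper's are close cousins but use different orderings. You fix a distinguished vertex $x_\ast$ (the top of a linear order on $V$), set $\eta=\xi+\delta_{x_\ast}$, and observe that every other term $g(\xi-\delta_x+\delta_{x_\ast})$ has strictly larger $x_\ast$-coordinate, so a downward lex induction from $(k-1)\delta_{x_\ast}$ finishes. The paper instead stratifies $\Xi_{k-1}$ by the \emph{maximum occupation number} $\max_x\xi_x$: for $\xi$ with $\xi_y=\max_x\xi_x=k-\ell$, one takes $\eta=\xi+\delta_y$ (so $y$ depends on $\xi$), and then each $\xi-\delta_x+\delta_y$ with $x\ne y$ lies in the previous stratum $\Xi_{k-1}^{(\ell-1)}$. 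Both arguments isolate $g(\xi)$ with coefficient $\xi_\bullet+1>0$; yours is arguably more streamlined since the distinguished vertex is fixed once and for all, while the paper's choice is configuration-dependent but makes the stratification by maximum height explicit.
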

	\begin{proof}
		The injectivity of $\cJ_k$ is obvious from the definition in \eqref{eq:Jk}, because $\cJ_k \varphi(\mathbf x) =0\in \R^{V^k}$ if and only if $\varphi(x_1,\ldots, x_{k-1})=0\in \R^{V^{k-1}}$, for all $\mathbf x=(x_1,\ldots, x_k)\in V^k$.
		The proof of injectivity of $\ann$ does not follow at once from the representation in Lemma \ref{lem:ann-representation}, because $\cS_k$ is not one-to-one (recall that $k\ge 2$); therefore, we take an alternative route. 
		
		Partition $\Xi_{k-1}$ into disjoint subsets $\{\Xi_{k-1}^{(\ell)}\}_{\ell=0,1,\ldots, k-1}$, given by
		\begin{equation*}
			\Xi_{k-1}^{(0)}:=\emp\ ,\qquad 	\Xi_{k-1}^{(\ell)}:= \left\{\xi\in \Xi_{k-1}: \max_{x\in V}\xi_x=k-\ell \right\}\ ,\quad \ell =1,\ldots,k-1\ .
		\end{equation*}
		Observe that $\Xi_{k-1}^{(\ell)}$ may be empty. The injectivity of $\ann$ follows from the following claim: for all $g\in \R^{\Xi_{k-1}}$ and $\ell=1,\ldots, k-1$,
		\begin{equation}\label{eq:claim-injectivity}
			\ann g=0\quad \text{and}\quad g=0\ \text{on}\ \Xi_{k-1}^{(\ell-1)}\quad \text{imply}\quad g=0\ \text{on}\ \Xi_{k-1}^{(\ell)}\ ,
		\end{equation}
		where we interpret \textquotedblleft $g=0$ on $\emp$\textquotedblright\ as a void condition. We now prove \eqref{eq:claim-injectivity} for $\ell=1,\ldots, k-1$. Consider $\xi \in \Xi_{k-1}^{(\ell)}$ with $\xi_y=\max_{x\in V}\xi_x$; then, 
		\begin{equation*}
			0=\ann g(\xi+\delta_y) = \sum_{x\in V}(\xi_x+\delta_{x,y})\, g(\xi-\delta_x+\delta_y) = (\xi_y+1)\,g(\xi)\ ,
		\end{equation*}
		where the first identity used the assumption $\ann g=0$ on $\Xi_k$, the second one simply used the definition of $\ann g(\xi+\delta_y)$, whereas the last one used that $\xi-\delta_x+\delta_y\in \Xi_{k-1}^{(\ell-1)}$ for all $x\neq y$ and the assumption that $g=0$ on $\Xi_{k-1}^{(\ell-1)}$. This yields \eqref{eq:claim-injectivity} and, thus, the desired result.	\end{proof}

	\subsection{Labeled versions of ${\rm SIP}$ and intertwining relations}
	There are at least two {labeled} versions of  ${\rm SIP}_k(G,\alpha)$, depending on how one models the interaction of the labeled particles (the non-interacting part of the dynamics, namely, the rate $c_{xy}\,\alpha_y$ to jump from $x$ to $y\in V$, is the same for the two models):
	the symmetric and the
	lookdown versions (see also Remark \ref{rem:time-reversal} below).
	These  models may be informally described as follows.
	
	Let two labeled particles be sitting on $x$ and $y\in V$, respectively. In the symmetric version, they interact with rate $2c_{xy}\ge 0$ and, \textit{regardless of their labels}, either one of them joins, with probability $1/2$, the site of the other one. This is described by the following generator, which acts on functions $\varphi \in \R^{V^k}$ and $\mathbf x\in V^k$, as
	\begin{equation}\label{eq:generator-sym}
		\cL_k \varphi(\mathbf x)= \sum_{x,y\in V}c_{xy}\sum_{i=1}^k \delta_{x,x_i}\bigg(\alpha_y + \sum_{j=1}^k\delta_{y,x_j}\bigg)(\varphi(\mathbf x_i^y)-\varphi(\mathbf x))\ ,
	\end{equation}
	where $\mathbf x_i^y\in V^k$ denotes the configuration in which the $i^{th}$ coordinate of $\mathbf x\in V^k$ is set equal to $y\in V$.
	In the  lookdown version,  two labeled particles at $x, y\in V$ still interact at rate $2c_{xy}$, but, with probability one, \textit{only the particle with the higher label jumps on top of the low-label one}. This dynamics is encoded in the following generator:
	\begin{equation}\label{eq:lookdown}
		\widehat \cL_k \varphi(\mathbf x)= \sum_{x,y\in V}c_{xy}\sum_{i=1}^k \delta_{x,x_i}\bigg(\alpha_y+2\sum_{j=1}^{i-1}\delta_{y,x_j}\bigg)(\varphi(\mathbf x_i^y)-\varphi(\mathbf x))\ .
	\end{equation}
	Consequently, in this second model, the first/bottom particle moves as ${\rm RW}(G,\alpha)$, independently from all the other particles. More generally, the $i^{th}$ particle is influenced by each particle with label $j<i$, and \textit{not} by those with label $j>i$. In particular, removing the top particle from the lookdown process either at time $t=0$ or time $t> 0$ does not affect in any way the evolution of the first $k-1$ particles. At the level of finite-dimensional distributions, this simple consideration implies the following intertwining relation for the lookdown system, as explained in the following lemma.
	\begin{lemma}[Intertwining for lookdown process]\label{lemma:intertwining-lookdown}
		We have $\cJ_k \widehat \cL_{k-1} = \widehat \cL_k\cJ_k$.	 
	\end{lemma}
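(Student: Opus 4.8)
The plan is to verify the operator identity pointwise, by expanding both sides through the definition \eqref{eq:lookdown} and matching terms; the whole point is that the lookdown rate felt by the $i^{th}$ particle involves only particles with label $j<i$. Fix $\varphi\in\R^{V^{k-1}}$ and $\mathbf x=(x_1,\ldots,x_k)\in V^k$, and abbreviate $\mathbf x':=(x_1,\ldots,x_{k-1})\in V^{k-1}$, so that $\cJ_k\psi(\mathbf x)=\psi(\mathbf x')$ for every $\psi\in\R^{V^{k-1}}$, and $(\mathbf x')_i^y:=(x_1,\ldots,x_{i-1},y,x_{i+1},\ldots,x_{k-1})$ for $i\le k-1$.

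First I would expand the right-hand side $\widehat\cL_k(\cJ_k\varphi)(\mathbf x)$ using \eqref{eq:lookdown}, splitting the sum over $i\in\{1,\ldots,k\}$ into the contributions $i\le k-1$ and $i=k$. For the term $i=k$, the configuration $\mathbf x_k^y=(x_1,\ldots,x_{k-1},y)$ has the same first $k-1$ coordinates as $\mathbf x$, hence $\cJ_k\varphi(\mathbf x_k^y)=\varphi(\mathbf x')=\cJ_k\varphi(\mathbf x)$ and the increment $\cJ_k\varphi(\mathbf x_k^y)-\cJ_k\varphi(\mathbf x)$ vanishes; this encodes the probabilistic fact that removing the top particle does not feel the top particle's own moves. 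For each $i\le k-1$, the weight $\alpha_y+2\sum_{j=1}^{i-1}\delta_{y,x_j}$ depends only on $x_1,\ldots,x_{i-1}$ (with $i-1\le k-2$), so it coincides with the corresponding weight in $\widehat\cL_{k-1}$; moreover $\mathbf x_i^y$ has first $k-1$ coordinates $(\mathbf x')_i^y$, so $\cJ_k\varphi(\mathbf x_i^y)-\cJ_k\varphi(\mathbf x)=\varphi((\mathbf x')_i^y)-\varphi(\mathbf x')$. Collecting the $i\le k-1$ terms then reproduces exactly $\widehat\cL_{k-1}\varphi(\mathbf x')=\cJ_k(\widehat\cL_{k-1}\varphi)(\mathbf x)$, which gives the claim.

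I do not anticipate any genuine obstacle: the argument is pure bookkeeping, and the only thing to keep track of carefully is that in \eqref{eq:lookdown} the inner sum $\sum_{j=1}^{i-1}$ ranges over $j<i$ only — this is precisely what makes the $i=k$ term drop and renders the $i\le k-1$ rates independent of the top lane. Alternatively, one could phrase the proof probabilistically: the process generated by $\widehat\cL_k$, restricted to its first $k-1$ lanes, is itself the lookdown dynamics generated by $\widehat\cL_{k-1}$, so that deleting the top lane intertwines the two generators; but the direct term-by-term verification above is shorter and self-contained.
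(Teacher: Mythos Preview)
Your proof is correct and follows exactly the same approach as the paper: the paper's proof is the one-line observation that $\widehat\cL_k$ applied to a function constant in the $k^{th}$ coordinate reproduces $\widehat\cL_{k-1}$, and your argument is a careful unpacking of precisely this fact (the $i=k$ term vanishes, and the $i\le k-1$ terms match because the lookdown rates only involve lower labels).
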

	\begin{proof}
		The claim follows at once from the definition \eqref{eq:Jk} of $\cJ_k$ and the fact that the action of $\widehat \cL_k$ (see \eqref{eq:lookdown})  on functions in $\R^{V^k}$ that are constant with respect to the $k^{th}$ coordinate gives back the action of $\widehat \cL_{k-1}$. 
	\end{proof}

	Clearly, removing labels from the symmetric model (see \eqref{eq:generator-sym}) yields back ${\rm SIP}_k(G,\alpha)$, whose generator $L_k$ is given in \eqref{eq:generator-SIP}. At the level of finite-dimensional distributions, this means that
	\begin{equation}\label{eq:gen-gen-sym}
		\cS_k \cL_k(f\circ \Phi_k)= \cL_k \cS_k(f\circ \Phi_k) = 	(L_k f)	\circ \Phi_k\ ,\qquad f \in \R^{\Xi_k}\ .
	\end{equation}
	We remark that this fact holds true for any particles' labeling, since the dynamics does not actually depend on labels. What is slightly less evident is that, at any time, we obtain the distribution of ${\rm SIP}_k(G,\alpha)$ after removing the labels of the lookdown model, provided that the particles are \textit{initially labeled uniformly at random}. This identity in law is one of the remarkable achievements of \cite{donnelly_kurtz_countable_1996}, which has been considerably generalized since then (e.g., \cite{etheridge_kurtz_genealogical_2019,etheridge_looking_2024} and references therein). We provide a  proof for our case below. For this purpose, recall the definitions \eqref{eq:symmetrization}, \eqref{eq:lookdown}, \eqref{eq:Phi-k} and \eqref{eq:generator-SIP}.
	\begin{proposition}[From lookdown to ${\rm SIP}$]\label{prop:lookdown-sip}
		For all $f\in \R^{\Xi_k}$, we have
		\begin{equation*}
			\cS_k \widehat \cL_k(f\circ \Phi_k)=	(L_k f)\circ \Phi_k\ .
		\end{equation*}
	\end{proposition}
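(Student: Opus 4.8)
The plan is to prove the identity pointwise: fix $f \in \R^{\Xi_k}$ and $\mathbf x = (x_1,\ldots,x_k) \in V^k$, and show $\cS_k \widehat\cL_k(f\circ\Phi_k)(\mathbf x) = (L_k f)(\Phi_k(\mathbf x))$. The left-hand side is an average over $\sigma \in S_k$ of $\widehat\cL_k(f\circ\Phi_k)(\sigma\mathbf x)$; since $\Phi_k(\sigma\mathbf x_i^y) = \Phi_k(\sigma\mathbf x) - \delta_{(\sigma\mathbf x)_i} + \delta_y$ whenever we move the $i$th coordinate of $\sigma\mathbf x$ to $y$, the expression $f\circ\Phi_k$ already depends only on the unlabeled configuration. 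So the whole content is bookkeeping of the \emph{rates} in \eqref{eq:lookdown} after averaging over labelings. Concretely, I would first rewrite $\widehat\cL_k(f\circ\Phi_k)(\mathbf x)$ by splitting the rate $\alpha_y + 2\sum_{j=1}^{i-1}\delta_{y,x_j}$ into its \textbf{non-interacting part} (the $\alpha_y$ term) and its \textbf{interacting part} (the $2\sum_{j<i}\delta_{y,x_j}$ term), and handle the two contributions separately.

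For the non-interacting part, the contribution to $\widehat\cL_k(f\circ\Phi_k)(\mathbf x)$ is $\sum_{x,y}c_{xy}\sum_{i}\delta_{x,x_i}\alpha_y\,(f(\Phi_k(\mathbf x)-\delta_x+\delta_y)-f(\Phi_k(\mathbf x)))$, which is manifestly symmetric under relabeling $\mathbf x \mapsto \sigma\mathbf x$ (it only sees the multiset of positions), so applying $\cS_k$ leaves it unchanged, and grouping $\sum_i \delta_{x,x_i} = \eta_x$ with $\eta := \Phi_k(\mathbf x)$ yields exactly the $\alpha_y$-piece of $L_k f(\eta)$ in \eqref{eq:generator-SIP}. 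For the interacting part, after averaging over $\sigma \in S_k$ I must compute, for each ordered pair of \emph{occupied} sites, the average number of pairs $(i,j)$ with $j<i$, $(\sigma\mathbf x)_i = x$, $(\sigma\mathbf x)_j = y$. Given $\eta = \Phi_k(\mathbf x)$ with $\eta_x$ particles at $x$ and $\eta_y$ at $y$: if $x \ne y$, among a uniformly random ordering each unordered pair consisting of one $x$-particle and one $y$-particle is equally likely to appear in either order, so the expected count of such ordered pairs with the $x$-particle on top is $\tfrac12 \eta_x\eta_y$; if $x = y$, the expected number of ordered pairs of $x$-particles with $j<i$ is $\tfrac12\eta_x(\eta_x-1)$. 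Multiplying by the factor $2$ in front of the interacting rate converts these into $\eta_x\eta_y$ and $\eta_x(\eta_x-1)$ respectively — precisely the coefficients appearing when one expands $\eta_x(\alpha_y+\eta_y)$ minus its $\alpha_y$ part in \eqref{eq:generator-SIP}, treating the $x=y$ diagonal correctly (the $y=x$ move contributes nothing to $f$ but must be accounted for in the rate bookkeeping, or else excluded consistently on both sides).

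Assembling the two parts gives $\cS_k\widehat\cL_k(f\circ\Phi_k)(\mathbf x) = \sum_{x,y}c_{xy}\,\eta_x(\alpha_y+\eta_y)\,(f(\eta-\delta_x+\delta_y)-f(\eta)) = L_k f(\eta)$, as desired. The main obstacle I anticipate is purely combinatorial accounting rather than conceptual: getting the diagonal term $x=y$ and the "jump onto one's own site" terms to cancel or match correctly on both sides, and making sure the factor-of-$2$ in the lookdown rate \eqref{eq:lookdown} is consumed exactly by the $\tfrac12$ from "each unordered pair is ordered one of two ways." An alternative, slicker route — which I would mention as a remark — is to avoid the pair-counting entirely by invoking \eqref{eq:gen-gen-sym} together with a direct check that $\cS_k\widehat\cL_k(f\circ\Phi_k) = \cS_k\cL_k(f\circ\Phi_k)$, i.e.\ that the symmetric and lookdown generators agree once composed with $\cS_k$ on the left and applied to symmetric (label-independent) functions; this reduces to the same averaging computation but frames it as "symmetrizing the lookdown interaction recovers the symmetric interaction," which is conceptually transparent.
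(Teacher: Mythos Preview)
Your argument is correct: the split into the non-interacting $\alpha_y$-part and the interacting part, together with the observation that averaging $2\sum_{j<i}\delta_{x,(\sigma\mathbf x)_i}\delta_{y,(\sigma\mathbf x)_j}$ over $\sigma\in S_k$ yields $\eta_x\eta_y$ for $x\ne y$ (and a vanishing diagonal contribution), is exactly the combinatorial identity needed, and your handling of the factor $2$ versus the $\tfrac12$ from ordering is right.

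The paper takes a different route. Rather than averaging the lookdown rates directly against a symmetric test function for general $k$, it proves the stronger \emph{operator} identity $\cS_k\widehat\cL_k=\cL_k\cS_k$ on all of $\R^{V^k}$ (not just on functions of the form $f\circ\Phi_k$), and then invokes \eqref{eq:gen-gen-sym}. To establish that operator identity, the paper observes that the interaction is pairwise, so by linearity it suffices to treat $k=2$, which is dispatched by a short explicit calculation. Your approach has the advantage of being self-contained for general $k$ without the reduction-to-pairs step, and of making the combinatorics (each unordered pair contributes once after symmetrization) completely transparent; the paper's approach has the advantage of yielding a slightly stronger statement and keeping the computation tiny. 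Your ``alternative, slicker route'' is in spirit the paper's route, though the paper goes one step further and proves $\cS_k\widehat\cL_k=\cL_k\cS_k$ rather than merely equality on symmetric functions.
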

	\begin{proof}
		In view of \eqref{eq:gen-gen-sym}, it suffices to prove 
		\begin{equation}\label{eq:id-lookdown}
			\cS_k \widehat \cL_k=\cL_k \cS_k
		\end{equation}
		Moreover, we find more instructive to show the details only for the case $k=2$ (which was also discussed in \cite[Section 4.2.3]{franceschini2020symmetric}). In fact, the general case $k\ge 2$ follows (up to some notational complications)  from this case $k=2$, because, by linearity, one can deal separately with terms involving different pairs of particles at the time.
		Henceforth, let us prove \eqref{eq:id-lookdown} for $k=2$. Fix the positions of the two particles, say $x,y\in V$. Then, for all $\varphi\in \R^{V^2}$, we have
		\begin{align*}
			\cS_2 \widehat \cL_2 \varphi(x,y)&= \frac12\big(\widehat \cL_2\varphi(x,y)+\widehat \cL_2\varphi(y,x)\big)\\
			&=\frac{c_{xy}\,\alpha_y}2\big(2\varphi(y,y)-\varphi(y,x)-\varphi(x,y)\big)\\
			&+\frac{c_{xy}\,\alpha_x}2\big(2\varphi(x,x)-\varphi(y,x)-\varphi(x,y)\big)\\
			&+ \frac{c_{xy}}2 \big(2\varphi(y,y)-2\varphi(x,y)+2\varphi(x,x)-2\varphi(y,x)\big)\\
			&= c_{xy}\, \alpha_y\big(\cS_2\varphi(y,y)-\cS_2\varphi(x,y)\big)\\
			&+c_{xy}\, \alpha_y \big(\cS_2\varphi(x,x)-\cS_2\varphi(x,y)\big)\\
			&+c_{xy}\big(\cS_2\varphi(y,y)+\cS_2\varphi(x,x)-2\cS_2\varphi(x,y)\big)  = \cL_2 \cS_2\varphi(x,y)\ .
		\end{align*}
		This concludes the proof. 
	\end{proof}

	We now have all we need to prove the intertwining relation \eqref{eq:intertwining-ann}.
	\begin{proposition}[Intertwining for ${\rm SIP}$]\label{prop:intertwining-sip}
		We have $\ann L_{k-1}= L_k \ann$.
	\end{proposition}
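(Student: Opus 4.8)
The plan is to deduce $\ann L_{k-1}=L_k\ann$ from the lookdown intertwining relation of Lemma~\ref{lemma:intertwining-lookdown}, transporting the identity between labeled and unlabeled configurations via $\Phi_k$, $\cS_k$ and $\cJ_k$, and using the representation of $\ann$ from Lemma~\ref{lem:ann-representation} together with the ``lookdown-to-${\rm SIP}$'' identity of Proposition~\ref{prop:lookdown-sip}. Since $\Phi_k\colon V^k\to\Xi_k$ is onto, the pullback $h\mapsto h\circ\Phi_k$ is injective on $\R^{\Xi_k}$, so it suffices to check the claimed operator identity after composing with $\Phi_k$; that is, to prove $(\ann L_{k-1}g)\circ\Phi_k=(L_k\ann g)\circ\Phi_k$ for every $g\in\R^{\Xi_{k-1}}$, the case $k=1$ being immediate since $L_0\equiv 0$ on constants.

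Writing $\varphi\eqdef g\circ\Phi_{k-1}\in\R^{V^{k-1}}$, I would chain the following equalities in $\R^{V^k}$:
\begin{align*}
 (\ann L_{k-1}g)\circ\Phi_k
 &= k\,\cS_k\cJ_k\big((L_{k-1}g)\circ\Phi_{k-1}\big)
 && \text{(Lemma \ref{lem:ann-representation} applied to }L_{k-1}g\text{)}\\
 &= k\,\cS_k\cJ_k\,\cS_{k-1}\widehat{\cL}_{k-1}\varphi
 && \text{(Proposition \ref{prop:lookdown-sip} at level }k-1\text{)}\\
 &= k\,\cS_k\cJ_k\,\widehat{\cL}_{k-1}\varphi
 && \text{(reshuffling: }\cS_k\cJ_k\cS_{k-1}=\cS_k\cJ_k\text{)}\\
 &= k\,\cS_k\,\widehat{\cL}_k\cJ_k\varphi
 && \text{(Lemma \ref{lemma:intertwining-lookdown})}\\
 &= k\,\cL_k\,\cS_k\cJ_k\varphi
 && \text{(by }\eqref{eq:id-lookdown}\text{)}\\
 &= \cL_k\big((\ann g)\circ\Phi_k\big)
 && \text{(Lemma \ref{lem:ann-representation} again)}\\
 &= (L_k\ann g)\circ\Phi_k
 && \text{(by }\eqref{eq:gen-gen-sym}\text{).}
\end{align*}
In the last step we use that $(\ann g)\circ\Phi_k$ is $S_k$-invariant, hence fixed by $\cS_k$, so that $\cL_k\big((\ann g)\circ\Phi_k\big)=\cL_k\cS_k\big((\ann g)\circ\Phi_k\big)=(L_k\ann g)\circ\Phi_k$ by \eqref{eq:gen-gen-sym}. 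This proves the identity on $V^k$, and injectivity of the pullback then yields $\ann L_{k-1}=L_k\ann$.

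The only step that is not a bare substitution of already-established identities is the reshuffling fact $\cS_k\cJ_k\cS_{k-1}=\cS_k\cJ_k$, which is where the (mild) care is needed. I would prove it by writing $\cJ_k\cS_{k-1}=\cR_k\cJ_k$, where $\cR_k\colon\R^{V^k}\to\R^{V^k}$ averages a function over the permutations of its first $k-1$ coordinates only — both sides send $\varphi$ to $\mathbf x\mapsto\tfrac1{(k-1)!}\sum_{\tau\in S_{k-1}}\varphi(x_{\tau(1)},\dots,x_{\tau(k-1)})$ — and then using that $\cS_k\cR_k=\cS_k$, because $S_{k-1}$, viewed as the stabilizer of the $k$th coordinate, is a subgroup of $S_k$ and averaging over a subgroup followed by averaging over the full group equals averaging over the full group. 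Apart from this, the proof is bookkeeping; the only thing to keep straight is the direction in which the equivariance relations $\cS_k\widehat{\cL}_k=\cL_k\cS_k$ (from \eqref{eq:id-lookdown}) and $\cL_k\cS_k(\,\cdot\circ\Phi_k)=(L_k\,\cdot\,)\circ\Phi_k$ (from \eqref{eq:gen-gen-sym}) are invoked at each stage — so I do not expect a serious obstacle once the lookdown machinery of this appendix is in place.
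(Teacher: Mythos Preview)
Your proof is correct and follows essentially the same route as the paper's: both chain Lemma~\ref{lem:ann-representation}, Proposition~\ref{prop:lookdown-sip}, the reshuffling identity $\cS_k\cJ_k\cS_{k-1}=\cS_k\cJ_k$, Lemma~\ref{lemma:intertwining-lookdown}, and \eqref{eq:gen-gen-sym} in the same order to pass from $(\ann L_{k-1}g)\circ\Phi_k$ to $(L_k\ann g)\circ\Phi_k$. Your write-up is in fact slightly more explicit than the paper's in two places --- you justify the reshuffling identity via the subgroup-averaging argument (the paper just says ``Clearly''), and you cite \eqref{eq:id-lookdown} directly rather than Proposition~\ref{prop:lookdown-sip} for the step $\cS_k\widehat\cL_k=\cL_k\cS_k$, which is the more accurate reference since $\cJ_k\varphi$ need not be of the form $h\circ\Phi_k$.
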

	\begin{proof} Clearly, we have
		\begin{equation}\label{eq:S-J-S}\cS_k  \cJ_k = \cS_k \cJ_k \cS_{k-1}\ .
		\end{equation}Hence, for all $f\in \R^{\Xi_{k-1}}$, we get
		\begin{align*}
			\frac1k\,	(\ann L_{k-1} f)\circ \Phi_k&=  \cS_k\cJ_k((L_{k-1}f)\circ \Phi_{k-1}) &&\Longleftarrow\  \text{Lemma \ref{lem:ann-representation}}\\ 
			&=\cS_k \cJ_k \cL_{k-1}\cS_{k-1}(f\circ \Phi_{k-1}) &&\Longleftarrow\ \text{Eq.\ \eqref{eq:gen-gen-sym}}\\
			&=  \cS_k \cJ_k \cS_{k-1}\widehat \cL_{k-1}(f\circ \Phi_{k-1}) &&\Longleftarrow\ \text{Proposition \ref{prop:lookdown-sip}}\\
			&= \cS_k\cJ_k \widehat \cL_{k-1} (f\circ \Phi_{k-1}) &&\Longleftarrow\  \text{Eq.\ \eqref{eq:S-J-S}}\\
			&= \cS_k\widehat \cL_k\cJ_k (f\circ \Phi_{k-1}) &&\Longleftarrow\ \text{Lemma \ref{lemma:intertwining-lookdown}}\\
			&= \cL_k\cS_k \cJ_k (f\circ \Phi_{k-1}) &&\Longleftarrow\ \text{Proposition \ref{prop:lookdown-sip}}\\
			&= \frac1k\cL_k\cS_k ((\ann f)\circ \Phi_k) &&\Longleftarrow\ \text{Lemma \ref{lem:ann-representation}}\\
			&= \frac1k(L_k\ann f)\circ \Phi_k\ , &&\Longleftarrow\ \text{Eq.\ \eqref{eq:gen-gen-sym}}	\end{align*}
		where for the second-to-last line we also used that $\cS_k^2=\cS_k$.
	\end{proof}
	
	We conclude this appendix with a few comments on stationarity and time-reversal of the lookdown process. We leave the  verification of these claims to the reader.
	\begin{remark}[Invariant measure \& time-reversal]\label{rem:time-reversal}
		For both symmetric and lookdown models,  
		\begin{equation*}
			\omega_k(\mathbf x):= \frac{\alpha_{x_1}\big(\alpha_{x_2}+\delta_{x_2,x_1}\big)\cdots \big(\alpha_{x_k}+\sum_{j=1}^{k-1}\delta_{x_k,x_j}\big)}{|\alpha|\big(|\alpha|+1\big)\cdots \big(|\alpha|+k-1\big)}\ ,\qquad \mathbf x=(x_1,\ldots, x_k)\in V^k\ ,
		\end{equation*}
		is the unique invariant measure (provided $G$ is connected). More specifically, the symmetric model is reversible with respect to $\omega_k$, while this is not the case for the lookdown one. This, in particular, implies that the lookdown model admits a non-trivial time reversal process. The exact form of the corresponding infinitesimal generator $\widehat \cL_k^*$ does not play a role in our argument; we only mention that the jump rate of the $i^{th}$ particle may depend on the positions of all the other particles. Finally, ${\rm SIP}_k(G,\alpha)$ arises also from this time-reversal process: $\cS_k \widehat \cL_k = \widehat \cL_k^*\cS_k$, and $\widehat \cL_k^*$ satisfies an intertwining relation with the \textquotedblleft labeled\textquotedblright\ version of $\cre$.	
	\end{remark}
	
\end{appendix}

\subsection*{Acknowledgments}
	SK wishes to express his gratitude to the Institute of Science and Technology Austria (where the project was initiated) and the University of Trieste (where the project was completed) for the warm hospitality during his stays. FS thanks Pietro Caputo and Matteo Quattropani for fruitful discussions. The authors are grateful to the two anonymous referees for their insights and careful reading of the manuscript.
	
		SK was supported by NRF-2019-Fostering Core Leaders of the Future Basic Science Program/Global Ph.D. Fellowship Program, the National Research Foundation of Korea (NRF) grant funded by the Korean government (MSIT) (No. 2022R1F1A106366811 and 2022R1A5A6000840) and KIAS Individual Grant (HP095101) at the Korea Institute for Advanced Study.

\end{document}